\documentclass[reqno, 12pt]{amsart} 

\usepackage{amssymb}
\usepackage{amsthm}
\usepackage{amsfonts}
\usepackage{amsmath}
\usepackage{hyperref}
\hypersetup{
   colorlinks=true,
   linkcolor=blue,
}
\usepackage{enumerate}
 \usepackage{mathtools}
\usepackage{tikz}
\usepackage{mathdots}
 \usepackage{yhmath}
\usepackage{cancel}
\usepackage{ulem,xpatch}
\usepackage{color}
\usepackage{bbm}
\usepackage{dsfont}

\newtheorem{lemma}{Lemma}[section]
\newtheorem{proposition}{Proposition}[section]

\newtheorem{corollary}{Corollary}[section]

\newtheorem{theorem}{Theorem}[section]

\newtheorem{remark}{Remark}[section]

\newcommand{\PP}{\mathbb{P}}
\newcommand{\EE}{\mathbb{E}}

\DeclareMathOperator{\Cov}{Cov}

\makeatletter
\@addtoreset{equation}{section}
\makeatother

\renewcommand\thetable{\thesection.\@arabic\c@table}

\title[Amnesic Elephant Random Walks]{Limit theorems for amnesic elephant Random walks with random increment sizes}

\author{Cristian Coletti, Glauco Valle, Rafael Santos}
\thanks{Glauco Valle was supported by CNPq grant 307938/2022-0 and FAPERJ grant 
$\mbox{E-26/200.442/2023}$. Glauco Valle and Rafael Santos were supported by CNPq grant 403423/2023-6}

\address{
\newline
\newline
Cristian Coletti
\newline
UFABC -  Centro de Matemática, Computação e Cognição
\newline
e-mail: {\rm \texttt{cristian.coletti@ufabc.edu.br}} 
\newline
\newline
Glauco Valle 
\newline
UFRJ - Departamento de M\'etodos Estat\'{\i}sticos do Instituto de Matem\'atica.
\newline
e-mail: {\rm \texttt{glauco.valle@im.ufrj.br}}
\newline
\newline
Rafael Santos
\newline
UFRJ - Departamento de M\'etodos Estat\'{\i}sticos do Instituto de Matem\'atica.
\newline
e-mail: {\rm \texttt{rafaels@im.ufrj.br}}
}

\subjclass[2020]{60K37}
\keywords{amnesic elephant random walk, functional central limit theorem, non-Markovian random walks, stable limit theorem, superdiffusive fluctuations.}

\begin{document}

\maketitle

\begin{abstract}
We introduce a generalized amnesic elephant random walk in which, at each time, the direction of an underlying amnesic elephant random walk selects one of two increment distributions, while the actual increment is sampled from the corresponding independent sequence. The selected distribution does not need to determine the sign of the increment. This construction combines memory reinforcement, amnesia, and randomness in jump sizes. Assuming finite second moments, we establish central limit theorems and functional limit theorems, identifying diffusive, critical, and superdiffusive regimes determined by the memory and amnesia parameters. We further prove a Gaussian fluctuation theorem around the random superdiffusive limit, with an explicit limiting variance that captures the contributions of both the underlying amnesic walk and the random jump magnitudes. For the special case without amnesia, we also establish stable limit theorems and their functional counterparts when the increments belong to the domain of attraction of a non-Gaussian stable law, in the regime where heavy-tailed jumps dominate the memory contribution.
\end{abstract}


\setcounter{tocdepth}{2}




\section{Introduction}
\label{Introduction}

The elephant random walk (ERW) is a random walk model with long-range memory in its increments. Although its position is a time-inhomogeneous Markov process, its increments are generally non-Markovian. The model provides a simple framework for studying memory effects in complex systems (see \cite{S} and the references therein) and has become an important \textit{toy model} in physics and in the theory of stochastic processes. From a probabilistic point of view, one of its main features is the transition between regimes with Gaussian and non-Gaussian scaling limits. Related questions about fluctuations and scaling limits also arise in other models with dependent increments, such as self-repelling random walks with directed edges, see \cite{V}.

In this paper, we consider a generalization of the amnesic elephant random walk (AERW) introduced by Laulin \cite{LL}, allowing the distribution of each new increment to depend on the underlying AERW. Let us first recall the usual ERW, as studied in \cite{B18,C}. Its increments $(X_n)_{n\geq 1}$ are associated with a pair of probabilities $q,p\in(0,1)$ and are constructed as follows: Initially, we set $X_1$ as a Rademacher random variable with parameter $q$, i.e., $\PP(X_1 = 1) = q$ and $\PP(X_1 = -1) = 1-q$. Given $X_1,\ldots,X_n$, we choose an index $\beta_n$ uniformly from $\{1,\ldots,n\}$ and set $X_{n+1}=X_{\beta_n}$ with probability $p$, or $X_{n+1}=-X_{\beta_n}$ with probability $1-p$. Equivalently,
\[
X_{n+1}=\alpha_nX_{\beta_n},
\]
where $\PP(\alpha_n=1)=p$ and $\PP(\alpha_n=-1)=1-p$. At each time, the variables $\alpha_n$ and $\beta_n$ are independent of each other and of the past. The position of the walk is
\[
S_0=0,
\qquad
S_n=\sum_{j=1}^nX_j,\quad n\geq1.
\]

The limit theorems in \cite{B18,C} describe a change in behavior according to the parameter $p$. If $p<3/4$, a central limit theorem (CLT) holds on the diffusive scale $\sqrt{n}$. If $p=3/4$, the limit is still Gaussian, but the normalization becomes $\sqrt{n\log n}$. If $p>3/4$, then $S_n/n^{2p-1}$ converges almost surely to a non-degenerate, non-Gaussian random variable. Functional versions of these limit theorems were obtained in \cite{BB16,BO22}. In the superdiffusive regime, the non-Gaussian leading limit is accompanied by Gaussian fluctuations on a smaller scale: Kubota and Takei \cite{KT19} prove a CLT for the difference between the walk and its random leading term.

The AERW is constructed analogously, but the uniform choice of a past increment is replaced by a distribution that favors more recent increments. Let $\Gamma(\cdot)$ denote the gamma function. For a fixed parameter $\beta\geq0$, the memory index has probability mass function
\[
\PP(\beta_n=k)
=
\frac{(\beta+1)\Gamma(k+\beta)\Gamma(n)}
{\Gamma(k)\Gamma(n+\beta+1)},
\qquad 1\leq k\leq n,
\]
and the recurrence remains $X_{n+1}=\alpha_nX_{\beta_n}$. When $\beta=0$, this distribution is uniform and the AERW reduces to the classical ERW. Larger values of $\beta$ give greater preference to recent increments, while every past increment remains available for selection. Functional limit theorems for this model were obtained in \cite{LL}. The same three regimes occur, but the critical probability becomes
\[
p_c(\beta)=\frac{4\beta+3}{4\beta+4}.
\]
In the superdiffusive regime $p>p_c(\beta)$, the scale of $S_n$ is $n^\delta$, where
\[
\delta=(2p-1)(\beta+1)-\beta.
\]

Random step sizes have also been considered by Aguech \cite{A24}, together with gradually increasing memory and delays. Our proposal allows the entire distribution of a new increment, rather than only its sign, to depend on the underlying walk. Moreover, we use the AERW, so that the model includes both memory reinforcement and amnesia. Other related generalizations studied in the literature include random walks with reinforced increments \cite{BO22} and the minimal random walk \cite{CG22}.

More precisely, let $(X_n)_{n\geq1}$ be the increments of an AERW with parameters $q,p\in(0,1)$ and $\beta\geq0$. Let $(Z_n^+)_{n\geq1}$ and $(Z_n^-)_{n\geq1}$ be two sequences of i.i.d. random variables, independent of each other and of $(X_n)_{n\geq1}$. We define $W_0=0$ and
\[
W_{n+1}=W_n+\mathcal{Z}_{n+1},
\qquad n\geq0,
\]
where
\[
\mathcal{Z}_{n+1}:=Z_{n+1}^{X_{n+1}}
=
\begin{cases}
Z_{n+1}^+, & X_{n+1}=1,\\
Z_{n+1}^-, & X_{n+1}=-1.
\end{cases}
\]
The process $W=(W_n)_{n\geq0}$ is called the generalized amnesic elephant random walk (GAERW), or simply the generalized elephant random walk (GERW) when $\beta=0$.

The superscripts $+$ and $-$ label the two increment distributions and do not impose sign restrictions on $Z_n^+$ and $Z_n^-$. Thus $X_n$ selects the distribution of the increment, but need not determine its sign. The choices $Z_n^+\equiv1$ and $Z_n^-\equiv-1$ recover the AERW. More generally, choosing $Z_1^+$ and $-Z_1^-$ to have the same nonnegative distribution recovers a model with a common random jump magnitude and direction determined by $X_n$. Our construction also allows the two increment distributions to have different means, variances, and tail behavior.

When the first moments are finite, write
\[
\rho^+=\EE[Z_1^+],
\qquad
\rho^-=\EE[Z_1^-],
\qquad
\lambda=\frac{\rho^++\rho^-}{2},
\qquad
\rho=\frac{\rho^+-\rho^-}{2}.
\]
The decomposition
\[
W_n-n\lambda=\rho S_n+V_n,
\qquad
V_n=\sum_{k=1}^n
\bigl(Z_k^{X_k}-\lambda-\rho X_k\bigr),
\]
separates the contribution of the underlying AERW from the centered randomness of the increments. Conditionally on the entire AERW trajectory, the summands defining $V_n$ are independent and have mean zero. The relative sizes of these two contributions explain the different limiting behaviors considered in this paper.

In Section \ref{sec:GAERW}, we assume that the increment distributions have finite second moments. We prove a strong law of large numbers and establish functional limit theorems for the centered GAERW. The limits are Gaussian in the diffusive and critical regimes, with explicit covariance functions, whereas the superdiffusive functional limit is the random curve
\[
\bigl(\rho t^\delta\mathcal{L}_{q,\beta}\bigr)_{t\geq0},
\]
where $\mathcal{L}_{q,\beta}$ is the limiting random variable of the underlying AERW. When $\rho\neq0$, the corresponding one-dimensional normalizations are $\sqrt{n}$, $\sqrt{n\log n}$, and $n^\delta$, according as $p<p_c(\beta)$, $p=p_c(\beta)$, or $p>p_c(\beta)$.

In Section \ref{sec:fluctuations}, we study the fluctuations around the random superdiffusive limit. Under the same finite second moment assumption, we strengthen the convergence of $(W_n-n\lambda)/n^\delta$ to almost sure convergence and prove
\[
\frac{W_n-n\lambda-\rho n^\delta\mathcal{L}_{q,\beta}}
{\sqrt{n}}
\Longrightarrow
\mathcal{N}(0,\mathfrak{s}^2),
\qquad p>p_c(\beta).
\]
The variance $\mathfrak{s}^2$ is given explicitly in Theorem \ref{Fluctuations} and separates the contribution of the random increment sizes from that of the underlying AERW. This extends the Gaussian fluctuation theorem of \cite{KT19} to a model with both amnesia and random increment distributions. To the best of our knowledge, Gaussian fluctuations around the
random superdiffusive limit have not previously been established
for the AERW introduced in \cite{LL} with $\beta>0$.
Thus, our result is new even in the case
$Z_n^+\equiv1$ and $Z_n^-\equiv-1$. In particular, the centered randomness of the increments, which disappears at the leading superdiffusive scale, remains visible in the fluctuations. Related fluctuation results have also been obtained by
Bertenghi \cite{Be21} for superdiffusive step-reinforced random
walks with uniform sampling of the past, and by Roy, Takei
and Tanemura \cite{RTT25} for a unidirectional elephant random
walk with power-law memory, with Gaussian scale mixtures
arising in the latter case.

Finally, Section \ref{sec:stable} treats infinite variance increments for the GERW, that is, the case $\beta=0$. We assume that the two increment distributions belong to domains of attraction of stable laws with a common index $\alpha\in(0,2)$ and a common compatible normalization
\[
a_n\sim n^{1/\alpha}L(n),
\]
where $L$ is slowly varying. We prove convergence to an $\alpha$-stable law when
\[
p<\frac{\alpha+1}{2\alpha},
\]
and, for $1<\alpha<2$, establish the corresponding functional convergence to a stable L\'evy process in the Skorokhod $J_1$ topology.

For $1<\alpha<2$, we also describe the transition between limits governed by the heavy-tailed increments and limits governed by memory. Above the threshold $(\alpha+1)/(2\alpha)$, the memory contribution determines the leading limit whenever $\rho\neq$~$0$. At the threshold, the behavior depends on the slowly varying factor $L$: when $L(n)$ converges to a finite positive constant, the limit combines independent stable and ERW contributions when $\rho \neq 0$. These results distinguish the non-Gaussian behavior inherited from the elephant walk from that produced by the heavy tails of the increment distributions.

\section{Limit theorems for the GAERW with finite-variance increments}\label{sec:GAERW}

In this section, we assume that $(Z_n^+)_{n\ge 1}$ and $(Z_n^-)_{n\ge 1}$ are square-integrable
and we denote  
\begin{align*}
& (\sigma^+)^2 = \mbox{Var}[Z_1^+] \textrm{ and } (\sigma^-)^2 = \mbox{Var}[Z_1^-] .
\end{align*}
To simplify some expressions, we set $\vartheta = 2p-1$. Let $\mathcal{F}_n = \sigma(X_k,Z_k^+,Z_k^-: 1 \leq k \leq n)$  with $\mathcal{F}_0 = \{ \emptyset,\Omega\}$. We have as in \cite[(1.5)]{LL} that 
$$
\EE [ X_{n+1} | \mathcal{F}_n ] = \vartheta (\beta+1) \overline X^\mu_n \ \, a.s., \ \, n\ge 1,
$$
where
$$
\mu_{n} = \frac{\Gamma(n+\beta)}{\Gamma(n) \Gamma(\beta+1)} \ \mbox{and} \ \overline X^\mu_n = \frac{1}{n \mu_{n+1}} \sum_{k=1}^n X_k \mu_k, 
$$
which is simply the sample mean $\overline X_n$ when $\beta=0$ as in \cite[(2.2)]{B18}. We want to compute the expectations of the increments $\mathcal{Z}_{n+1}= W_{n+1} - W_{n}$. Note that
\begin{align}\label{incr-form}
\mathcal{Z}_{n+1} &= Z_{n+1}^{X_{n+1}} = \frac{(1 + X_{n+1})}{2} Z_{n+1}^+ + \frac{(1 - X_{n+1})}{2} Z_{n+1}^-
\nonumber \\
&= Z_{n+1}^{\alpha_n X_{\beta_n}} = \frac{(1 + \alpha_n X_{\beta_n})}{2} Z_{n+1}^+ + \frac{(1 - \alpha_n X_{\beta_n})}{2} Z_{n+1}^- ,
\end{align}
thus 
\begin{align*}
\EE [{\mathcal{Z}}_{n+1}|{\mathcal{F}}_n] &= \EE \Big[ \frac{(1+{X}_{n+1})}{2} Z_{n+1}^{+} + \frac{(1-{X}_{n+1})}{2} Z_{n+1}^{-} \Big| {\mathcal{F}}_n\Big]\\[0.5em]
&= \frac{\EE[Z_{n+1}^{+}]+E[Z_{n+1}^{-}]}{2} + \frac{\EE[{X}_{n+1}|{\mathcal{F}}_n] \EE[Z_{n+1}^+]}{2} - \frac{\EE[{X}_{n+1}|{\mathcal{F}}_n] \EE[Z_{n+1}^-]}{2}\\[0.5em]
&= \lambda + \rho\vartheta(\beta+1) \overline X^\mu_n.
\end{align*}
Therefore
\[
\EE[W_{n+1}|\mathcal{F}_n] = W_n + \EE[\mathcal{Z}_{n+1}|\mathcal{F}_n] = {W}_n + \lambda + \rho\vartheta(\beta+1) \overline X^\mu_n.
\]
From \cite[(4.5)]{LL} we have that $\overline X_n^{\mu}$ converges almost surely to zero. Thus $W_n - n \lambda$ has asymptotically centered increments. We use this to prove a strong law of large numbers for $W$.

\begin{lemma}\label{LLN}
$$
\frac{W_n}{n} \xrightarrow[n\to \infty]{a.s.} \lambda .
$$
\end{lemma}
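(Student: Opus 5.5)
We will prove the lemma through a martingale decomposition of $W_n-n\lambda$, using the conditional expectation computed just above. Define the martingale differences
\[
\varepsilon_{k}=\mathcal{Z}_{k}-\EE[\mathcal{Z}_{k}\mid\mathcal{F}_{k-1}],\qquad k\ge1,
\]
with the convention $\EE[\mathcal{Z}_1\mid\mathcal{F}_0]=\lambda+\rho(2q-1)$. Then
\[
W_n-n\lambda=M_n+\rho\bigl(2q-1\bigr)+\rho\vartheta(\beta+1)\sum_{k=1}^{n-1}\overline X^\mu_k,
\qquad M_n=\sum_{k=1}^n\varepsilon_k .
\]
It suffices to show that $M_n/n\to0$ and $n^{-1}\sum_{k<n}\overline X^\mu_k\to0$ almost surely.

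\textbf{Drift term.} By \cite[(4.5)]{LL}, $\overline X^\mu_n\to0$ almost surely. The Ces\`aro means of an almost surely convergent sequence converge to the same limit, so $n^{-1}\sum_{k=1}^{n-1}\overline X^\mu_k\to0$ almost surely. This step requires no further work.

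\textbf{Martingale term.} First bound the conditional second moments. Since $|\mathcal{Z}_k|\le|Z_k^+|+|Z_k^-|$ and the $Z$'s are square integrable,
\[
\EE[\varepsilon_k^2\mid\mathcal{F}_{k-1}]\le\EE[\mathcal{Z}_k^2\mid\mathcal{F}_{k-1}]\le 2\bigl(\EE[(Z_1^+)^2]+\EE[(Z_1^-)^2]\bigr)=:C.
\]
This holds because $Z_k^\pm$ is independent of $\mathcal{F}_{k-1}$ and of $X_k$. Hence
\[
\sum_k \frac{\EE[\varepsilon_k^2]}{k^2}\le C\sum_k k^{-2}<\infty .
\]
It follows that the martingale $\sum_k\varepsilon_k/k$ is bounded in $L^2$ and therefore converges almost surely. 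Kronecker's lemma then gives $M_n/n\to0$ almost surely. Alternatively, one can apply the strong law for martingales, $M_n/\langle M\rangle_n\to0$ on $\{\langle M\rangle_\infty=\infty\}$, using $\langle M\rangle_n\le Cn$.

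The only point requiring care is the uniform bound on the conditional variance, namely the independence of $(Z_k^+,Z_k^-)$ from $\sigma(\mathcal{F}_{k-1},X_k)$. This follows directly from the construction. Combining the two terms gives $(W_n-n\lambda)/n\to0$ almost surely, which is the claim.
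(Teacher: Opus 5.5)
Your proposal is correct and takes essentially the same approach as the paper. It uses the same decomposition of $W_n-n\lambda$ into a martingale with uniformly bounded conditional second moments, the constant $\rho(2q-1)$ from the first step, and a drift term that vanishes after division by $n$ because $\overline X^\mu_n\to0$ almost surely by \cite[(4.5)]{LL}; the martingale part is then handled exactly as in the paper, via $L^2$-boundedness of $\sum_k\varepsilon_k/k$ and Kronecker's lemma, and your cruder variance bound $2\bigl(\EE[(Z_1^+)^2]+\EE[(Z_1^-)^2]\bigr)$, compared with the paper's $\EE[(Z_1^+)^2]\vee\EE[(Z_1^-)^2]$, is immaterial.
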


\begin{proof} Suppose without loss of generality that $\lambda = 0$. Write
\begin{align*}
\frac{{W}_n}{n} &= \frac{1}{n} \sum_{j=1}^n ({W}_j - {W}_{j-1})\\[0.5em]
&= \frac{1}{n} \sum_{j=1}^n ({W_j} - \EE[{W_j}|{\mathcal{F}}_{j-1}]) + \frac{1}{n} \sum_{j=2}^n \frac{\rho \vartheta(\beta+1)}{(j-1)\mu_{j}} \sum_{k=1}^{j-1} {X}_k \mu_k + \frac{\rho(2q-1)}{n}.
\end{align*}
The third term above converges almost surely to zero. For the second term above, we have by  \cite[(4.5)]{LL} that
\[
\lim_{j \to \infty} \frac{1}{(j-1)\mu_{j-1}} \sum_{k=1}^{j-1} {X}_k\mu_k =0 \ a.s.,
\]
which implies that the aforementioned second term also converges almost surely to zero since $\frac{\mu_{j-1}}{\mu_j} = \frac{j-1}{j-1+\beta} \xrightarrow[]{n \to \infty} 1$. Now set ${M}_0 = 0$ and
\[
{M}_n = \sum_{j=1}^n ({W}_j - \EE[{W}_j|{\mathcal{F}}_{j-1}]), \ n \geq 1.
\]
Then $(M_n)_{n \geq 0}$ is a mean-zero $(\mathcal{F}_n)$-martingale, with
\[
\Delta M_j = {W}_j - \EE[{W}_j|{\mathcal{F}}_{j-1}]
\]
and therefore
\begin{align*}
\EE [ (\Delta {M}_j)^2 | {\mathcal{F}}_{j-1} ]
& = \EE [  ({W}_j - \EE[{W}_j |{\mathcal{F}}_{j-1}])^2 | {\mathcal{F}}_{j-1} ] = \EE [  ({\mathcal{Z}}_j - \EE[{\mathcal{Z}}_j |{\mathcal{F}}_{j-1}])^2 | {\mathcal{F}}_{j-1} ]\\
&= \EE [ {\mathcal{Z}}_j^2 | {\mathcal{F}}_{j-1} ] - \EE[{\mathcal{Z}}_j |{\mathcal{F}}_{j-1}]^2 \leq \EE [ (Z_{1}^+)^2 ] \vee \EE [ (Z_{1}^-)^2 ],
\end{align*}
implying that 
\begin{equation}
\label{eq:lemma1}
\EE [ (\Delta {M}_j)^2 | {\mathcal{F}}_{j-1} ] \leq E_{max}^2,
\end{equation}
where $E^2_{max} = \EE [ (Z_{1}^+)^2 ] \vee \EE [ (Z_{1}^-)^2 ]$. Taking expectations on both sides of \eqref{eq:lemma1} gives that
\[
\sup_{j\geq1} \EE[(\Delta M_j)^2] \leq E^2_{max} < \infty.
\]
Now consider the martingale
\[
\tilde{M}_n = \sum_{j=1}^n \frac{\Delta M_j}{j}.
\]
Since $(M_n)_{n \geq 1}$ is a square-integrable martingale, its increments are orthogonal in $L^2$ and hence
\[
\sup_{n \geq 1} \EE[\tilde{M}_n^2] \leq E^2_{max} \sum_{j=1}^n \frac{1}{j^2} < \infty.
\]
Thus $(\tilde{M}_n)_{n \geq 1}$ is bounded in $L^2$ and consequently there exists $\tilde{M}_{\infty} \in L^2$ such that $\tilde{M}_n \xrightarrow[n\to \infty]{a.s.} \tilde{M}_{\infty}$. Therefore, by Kronecker's Lemma,
\[
\frac{M_n}{n} = \frac{1}{n} \sum_{j=1}^n \Delta M_j \xrightarrow[n\to \infty]{a.s.} 0,
\]
which concludes the proof.

\end{proof}

Concerning the functional limit theorems, the GERW (when $\beta=0$) inherits the usual diffusive, critical, and superdiffusive regimes of the ERW, corresponding respectively to $p<3/4, p=3/4$, and $p>3/4$; see \cite{BB16}. For the AERW, and consequently for the GAERW, the critical probability is shifted to
\[
p_c(\beta)=\frac{4\beta+3}{4(\beta+1)},
\]
as shown in \cite{LL}. There is, however, an exceptional value within the diffusive regime, namely
\[
\tilde{p}(\beta)=\frac{2\beta+1}{2(\beta+1)}<p_c(\beta).
\]
At this value, the martingale decomposition used in \cite{LL} becomes singular and its functional limit theorem cannot be applied directly. The exceptional case is covered by \cite[Theorem 2.11]{MM} and the normalization remains diffusive, but the covariance function of the limiting Gaussian process contains a logarithmic term. We therefore state separately the functional limit theorem for the generic diffusive case $p\neq \tilde{p}(\beta)$ and for the exceptional case $p=\tilde{p}(\beta)$. Throughout the entire paper, $D([0,\infty))$ is endowed with the Skorokhod \(J_1\) topology.

\medskip

\begin{theorem}\label{FCLT<} If $p < \frac{4\beta + 3}{4\beta + 4}$, $p \neq \tilde{p}(\beta)$ and $\max\{(\sigma^+)^2, (\sigma^-)^2, \rho^2\} > 0$ then
$$
\Big( \frac{{W}_{\lfloor nt \rfloor} - \lfloor nt \rfloor\lambda}{\sqrt{n}} \Big)_{t\geq 0}
$$
converges in distribution on $D([0,\infty))$ to a continuous mean-zero Gaussian process $Y=(Y_t)_{t \ge 0}$ starting at $Y_0 \equiv 0$, whose covariance is, for $0 < s \leq t$,
\begin{align}\label{covfunction}
\Cov(Y_s,Y_t) &= \frac{s(\sigma^+)^2 + s(\sigma^-)^2}{2} + \frac{s \rho^2 \vartheta(1+2\beta-\vartheta-\vartheta\beta)}{(1-\vartheta)(1+2\beta-2\vartheta-2\vartheta\beta) (\vartheta-\beta+\vartheta\beta)}\Big( \frac{t}{s}\Big)^{\vartheta-\beta(1-\vartheta)} \nonumber\\
&+ \frac{s \rho^2 \beta}{(1-\vartheta)(\beta-\vartheta-\vartheta\beta)}.
\end{align}
\end{theorem}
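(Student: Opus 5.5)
We use the decomposition from the introduction,
\[
W_n-n\lambda=\rho S_n+V_n,\qquad V_n=\sum_{k=1}^n\bigl(Z_k^{X_k}-\lambda-\rho X_k\bigr),
\]
and prove joint functional convergence of $(S_{\lfloor nt\rfloor}/\sqrt n,\,V_{\lfloor nt\rfloor}/\sqrt n)$ to a pair of \emph{independent} continuous Gaussian processes. The first coordinate converges to the Laulin limit $G$ of the diffusive AERW \cite{LL}. The second converges to a Brownian motion with variance $\frac{(\sigma^+)^2+(\sigma^-)^2}{2}$. Then $Y=\rho G+B$, and the covariance \eqref{covfunction} is the sum of $\rho^2\Cov(G_s,G_t)$ and $s\frac{(\sigma^+)^2+(\sigma^-)^2}{2}$. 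The two $\rho^2$ terms in \eqref{covfunction} should be exactly Laulin's covariance rewritten with $\vartheta=2p-1$. We will check this algebraically: exponent $\vartheta-\beta(1-\vartheta)=\delta$, with denominators vanishing exactly at $p=\tilde p(\beta)$ and $p=p_c(\beta)$.

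To handle the correlation between $S$ and $V$ cleanly, we work with a single vector martingale with respect to $\mathcal F_n$. Write $\xi_k:=Z_k^{X_k}-\EE[Z_k^{X_k}\mid \mathcal F_{k-1},X_k]=Z_k^{X_k}-\lambda-\rho X_k$. Since $Z_k^\pm$ is independent of $\mathcal F_{k-1}\vee\sigma(X_k)$, we have $\EE[\xi_k\mid\mathcal F_{k-1}\vee\sigma(X_k)]=0$, so $V_n=\sum\xi_k$ is an $\mathcal F_n$-martingale. Its conditional variance is
\[
\EE[\xi_k^2\mid\mathcal F_{k-1}]=\tfrac{1+\EE[X_k\mid\mathcal F_{k-1}]}{2}(\sigma^+)^2+\tfrac{1-\EE[X_k\mid\mathcal F_{k-1}]}{2}(\sigma^-)^2 .
\]
By \cite[(4.5)]{LL}, $\EE[X_k\mid\mathcal F_{k-1}]=\vartheta(\beta+1)\overline X^\mu_{k-1}\to0$ a.s., so by Cesàro the bracket satisfies $n^{-1}\langle V\rangle_{\lfloor nt\rfloor}\to t\,\frac{(\sigma^+)^2+(\sigma^-)^2}{2}$ a.s. The AERW itself is handled as in \cite{LL}: $S_n$ is written via the martingale $M^S_n=a_nS_n-\ldots$ (normalized as $a_n\sum_k \mu_k X_k$ with suitable deterministic weights $a_n\sim n^{-\delta}$), whose increments are multiples of $\varepsilon_k=X_k-\EE[X_k\mid\mathcal F_{k-1}]$.

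The key cross term is
\[
\EE[\varepsilon_k\xi_k\mid\mathcal F_{k-1}]=\EE\bigl[X_k\,\EE[\xi_k\mid\mathcal F_{k-1}\vee\sigma(X_k)]\mid\mathcal F_{k-1}\bigr]-\EE[X_k\mid\mathcal F_{k-1}]\,\EE[\xi_k\mid\mathcal F_{k-1}]=0,
\]
so the bracket of the two-dimensional martingale is block diagonal. We then apply the functional martingale CLT in the same form as \cite{LL} (the Touati / Jacod–Shiryaev vector version with deterministic time-dependent normalizations). Its Lindeberg condition for $V$ follows from the i.i.d. structure: $\xi_k^2\le 2\bigl((Z_k^+-\rho^+)^2+(Z_k^--\rho^-)^2\bigr)$, and the latter are i.i.d. integrable, so $n^{-1}\sum_{k\le n}\EE[\xi_k^2\mathbf 1_{|\xi_k|>\epsilon\sqrt n}]\to0$. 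This gives joint convergence of $(M^S,V)$ with independent Gaussian limits. The continuous mapping used in \cite{LL} to pass from $M^S$ to $S$ (a time-change by $t^{\delta}$ and an integral correction, continuous on $C$) then yields joint convergence of $(S,V)$. Finally, summation $\rho S+V$ is continuous on $D\times D$ at continuous limits, giving the theorem. The hypothesis $\max\{(\sigma^+)^2,(\sigma^-)^2,\rho^2\}>0$ only ensures nondegeneracy.

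\textbf{Main obstacle.} The expected hardest step is rigorously importing Laulin's AERW argument jointly with $V$ rather than just quoting the marginal result. If the vector form is awkward, we would first prove tightness of each coordinate separately: $S$ from \cite{LL}, and $V$ via the martingale FCLT. Then we would identify finite-dimensional limits by applying the one-dimensional martingale CLT to linear combinations $\sum_i(u_iM^S_{\lfloor nt_i\rfloor}+v_iV_{\lfloor nt_i\rfloor})$, using the vanishing cross bracket. A final careful algebra step will match Laulin's covariance constants to the expression in \eqref{covfunction}.
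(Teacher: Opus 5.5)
Your proof is correct, but it takes a different route from the paper.

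\textbf{The paper's route.} The paper never builds a joint martingale.
\begin{itemize}
\item For the finite-dimensional distributions, it conditions on the whole AERW trajectory $\mathcal G_\infty$. It applies Lindeberg--Feller to the conditionally independent centered jump sums $\sum_k \tilde Z_k^{\pm}(X_k\pm1)/\sqrt n$, which gives an almost surely deterministic Gaussian limit of their conditional characteristic functions. Dominated convergence then factors the characteristic function into this Gaussian factor times that of $\rho S_{\lfloor n\cdot\rfloor}/\sqrt n$, whose limit is quoted from \cite[Theorem 2.3]{LL}.
\item Tightness is proved componentwise. The AERW term comes from \cite{LL}, and the jump terms from a martingale FCLT in the enlarged filtration $\mathcal H_k^+=\mathcal G_\infty\vee\sigma(\tilde Z_j^+:j\le k)$. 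A sum of $C$-tight sequences is then tight.
\end{itemize}
So in the paper, independence of the two Gaussian components comes from conditioning. In your argument it comes from the vanishing cross bracket $\EE[\varepsilon_k\xi_k\mid\mathcal F_{k-1}]=0$. This is the same orthogonality the paper only uses later, in Section~\ref{sec:fluctuations}, as $\langle V,N\rangle_n=0$.

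\textbf{What each route buys.} The paper's route is modular: it uses \cite{LL} only as a black box for the marginal AERW limit. It therefore carries over verbatim to $p=\tilde p(\beta)$ by replacing \cite{LL} with \cite{MM}; there Laulin's martingale $N$ is singular because $\delta=0$. Its quenched structure is also what survives in Section~\ref{sec:stable} with infinite-variance jumps.

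Your route gives a stronger conclusion in one step: joint functional convergence of $(S,V)$ to independent limits, with tightness and finite-dimensional convergence obtained together. The price is that you must reopen \cite{LL} rather than cite it.
\begin{itemize}
\item For $\beta>0$ the AERW is not a time change of the single martingale $M_n=a_nY_n$. Note also that $a_n\sim\Gamma(A+1)n^{-A}$; it is $a_n\mu_n$ that behaves like $n^{-\delta}$, not $a_n$.
\item You must carry Laulin's second martingale along, using $S_n=N_n+r_nM_n$. Since $\Delta N_k$ is also a deterministic multiple of $\varepsilon_k$, your zero-bracket computation covers it.
\item The passage from $(M,N,V)$ to $(S,V)$ then has to be redone jointly. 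This includes controlling $r_{\lfloor nt\rfloor}M_{\lfloor nt\rfloor}/\sqrt n$ near $t=0$ when $\delta<0$, where $t^\delta$ blows up.
\end{itemize}
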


\medskip

\begin{theorem}\label{FCLT<ptilde} If $p = \tilde{p}(\beta)$ and $\max\{(\sigma^+)^2, (\sigma^-)^2, \rho^2\} > 0$ then
$$
\Big( \frac{{W}_{\lfloor nt \rfloor} - \lfloor nt \rfloor\lambda}{\sqrt{n}} \Big)_{t\geq 0}
$$
converges in distribution on $D([0,\infty))$ to a continuous mean-zero Gaussian process $\tilde{Y}=(\tilde{Y}_t)_{t \ge 0}$ starting at $\tilde{Y}_0 \equiv 0$, whose covariance is, for $0 < s \leq t$,
$$
\Cov(\tilde{Y}_s,\tilde{Y}_t) = \frac{s(\sigma^+)^2 + s(\sigma^-)^2}{2} + \rho^2s \Big[ 1+ 2\beta  + 2\beta^2 + \beta(\beta+1) \log\Big( \frac{t}{s}\Big)\Big].\nonumber
$$
\end{theorem}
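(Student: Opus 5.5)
The plan is to use the decomposition from the Introduction,
\[
W_n-n\lambda=\rho S_n+V_n,\qquad V_n=\sum_{k=1}^n\bigl(Z_k^{X_k}-\lambda-\rho X_k\bigr).
\]
I would treat the two pieces separately and then combine them through a conditioning argument that yields asymptotic independence.

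First, at $p=\tilde p(\beta)$ we have $\vartheta=\beta/(\beta+1)$. Hence $\vartheta(\beta+1)-\beta=0$, which is exactly the degeneracy of the martingale decomposition of \cite{LL}. For the AERW part I would invoke \cite[Theorem 2.11]{MM}, as announced before the statement. It gives that $(S_{\lfloor nt\rfloor}/\sqrt n)_{t\ge0}$ converges in $D([0,\infty))$ to a continuous centered Gaussian process $G$ with
\[
\Cov(G_s,G_t)=s\Bigl[1+2\beta+2\beta^2+\beta(\beta+1)\log\frac ts\Bigr],\qquad 0<s\le t.
\]
I would check the constants by matching the parametrization of \cite{MM} (memory kernel $\mu_k$, reinforcement $\vartheta(\beta+1)=\beta$) with ours. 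One way is to compute $\EE[S_n^2]/n$ directly from the recursion $\EE[X_{n+1}\mid\mathcal F_n]=\beta\,\overline X^\mu_n$ and compare.

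Second, I would handle $V$ conditionally on the whole sequence $\mathcal X=(X_k)_{k\ge1}$. Given $\mathcal X$, the summands of $V_n$ are independent and centered, since $\EE[Z^{\pm}_1]-\lambda\mp\rho=0$. The $k$-th summand has variance $(\sigma^{X_k})^2=\frac{(\sigma^+)^2+(\sigma^-)^2}{2}+\frac{(\sigma^+)^2-(\sigma^-)^2}{2}X_k$. By Lemma \ref{LLN} in the special case $Z^\pm\equiv\pm1$ (or directly $S_n/n\to0$ a.s.), we get
\[
\frac1n\sum_{k\le nt}(\sigma^{X_k})^2\longrightarrow t\,\frac{(\sigma^+)^2+(\sigma^-)^2}{2}\quad\text{a.s., uniformly on compacts.}
\]
The conditional Lindeberg condition holds a.s. because each summand is drawn from one of only two fixed square-integrable laws. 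Therefore the Lindeberg--Feller functional CLT (Donsker--Prokhorov for triangular arrays) gives, for a.e. $\mathcal X$, that $(V_{\lfloor nt\rfloor}/\sqrt n)$ converges conditionally to $\sqrt{((\sigma^+)^2+(\sigma^-)^2)/2}\,B$, where $B$ is a standard Brownian motion.

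Third, for joint convergence I would take bounded continuous $f,g$ on $D([0,\infty))$. Then $\EE[f(S^{(n)})g(V^{(n)})]=\EE[f(S^{(n)})\,\EE[g(V^{(n)})\mid\mathcal X]]$, where $S^{(n)}$ and $V^{(n)}$ denote the rescaled paths. The inner conditional expectation converges a.s. to the constant $\EE[g(cB)]$. Dominated convergence together with the first step then gives convergence to $\EE[f(G)]\EE[g(cB)]$, so $(S^{(n)},V^{(n)})\Rightarrow(G,cB)$ with $G$ and $B$ independent.

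Since both limits are continuous, addition is continuous at the limit point in the $J_1$ topology. Hence $(W_{\lfloor nt\rfloor}-\lfloor nt\rfloor\lambda)/\sqrt n\Rightarrow\rho G+cB=:\tilde Y$. Its covariance is $s\frac{(\sigma^+)^2+(\sigma^-)^2}{2}+\rho^2\Cov(G_s,G_t)$, which is the claimed formula. The hypothesis $\max\{(\sigma^+)^2,(\sigma^-)^2,\rho^2\}>0$ only excludes the trivial zero limit; the degenerate subcases $\rho=0$ or $\sigma^\pm=0$ are covered by the same argument.

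The main obstacle is the first step. I need to make sure \cite[Theorem 2.11]{MM} applies verbatim to the AERW at $p=\tilde p(\beta)$ with this exact covariance, including the constant $1+2\beta+2\beta^2$. If the translation of parameters is unclear, I would instead prove the AERW limit directly. The approach would be to write $S_n=a_n^{-1}\sum_{k\le n}a_k\,\varepsilon_k+\text{(remainder)}$ using the martingale built from $\mu_k$, and apply a martingale FCLT. The logarithm in the covariance then comes from the quadratic variation $\sum_{k\le n}a_k^{2}/a_n^{2}\sim$ harmonic-type sums, which appear precisely because the exponent $\vartheta-\beta(1-\vartheta)$ vanishes.
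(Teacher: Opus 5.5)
Your proposal is correct and uses the same ingredients as the paper: the AERW part comes from \cite[Theorem 2.11]{MM}, the centered step sizes are handled by a Lindeberg--Feller argument conditional on $\mathcal{G}_\infty$, and the independence of the two Gaussian limits comes from conditioning on the walk. The difference is only in packaging: the paper proves finite-dimensional convergence via characteristic functions (Proposition \ref{prop:finitedim<tilde}) and tightness componentwise (Proposition \ref{prop:tightness}), whereas you obtain joint functional convergence of $(S^{(n)},V^{(n)})$ directly and conclude by continuous mapping.

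Two small corrections. First, \cite{MM} assumes $q=1/2$, so, as the paper remarks, you need a short extra argument (conditioning on $X_1=\pm1$) to cover general $q\in(0,1)$. Second, your fallback sketch would not work as written. At $\vartheta(\beta+1)=\beta$ one has $a_n\mu_n\equiv1$, so $a_n^{-1}\sum_{k\le n}a_k\varepsilon_k$ reconstructs $Y_n=\sum_{k\le n}\mu_kX_k$ rather than $S_n$. The logarithm in the covariance does not come from a harmonic-type quadratic variation. It comes from the drift part $\sum_k h_{k-1}=\beta\sum_k M_{k-1}/(k-1+\beta)$ of $S_n$, where $M_n=a_nY_n$ has linear quadratic variation.
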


\medskip

When $\beta = 0$, the covariance functions that appear in Theorems \ref{FCLT<} and \ref{FCLT<ptilde} coincide, having the following simplified expression, for $0 < s \leq t$,
$$
\mbox{Cov}(Y_s,Y_t) = \frac{s(\sigma^+)^2 + s(\sigma^-)^2}{2} + \frac{s \rho^2}{3 - 4p}\Big( \frac{t}{s}\Big)^{2p-1}.
$$
This includes noise reinforced Brownian motion as a special case when both increment distributions of the GERW are degenerate, i.e. $\sigma^+ = \sigma^- = 0$. 

Evaluating the limiting process at $t=1$ yields the following Central Limit Theorem.

\begin{corollary}\label{prop:unidim} If $p < \frac{4\beta + 3}{4\beta + 4}$ and $\max\{(\sigma^+)^2, (\sigma^-)^2, \rho^2\} > 0$, then
$$
\frac{W_{n} - n\lambda}{\sqrt{n}} \Longrightarrow \mathcal{N}\Big(0, \frac{(\sigma^+)^2 + (\sigma^-)^2}{2} + \frac{\rho^2(\beta-p+1)}{(1-p)(4 \beta-4p \beta-4p+3)} \Big) , \ \textrm{ as } n \to \infty.
$$
\end{corollary}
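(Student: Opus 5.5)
The plan is to evaluate the limits in Theorems \ref{FCLT<} and \ref{FCLT<ptilde} at $t=1$. The projection $x\mapsto x(1)$ is continuous on $D([0,\infty))$ at every path that is continuous at $1$. Both limiting processes $Y$ and $\tilde Y$ are continuous. Hence, by the continuous mapping theorem, $(W_n-n\lambda)/\sqrt{n}$ converges in distribution to $Y_1$ when $p\neq\tilde p(\beta)$, and to $\tilde Y_1$ when $p=\tilde p(\beta)$. Here we use that $\lfloor n\cdot1\rfloor=n$. Each limit is a centered Gaussian variable, so the remaining work is to identify its variance with the stated expression in both cases.

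\emph{Generic case $p\neq\tilde p(\beta)$.} Put $s=t=1$ in \eqref{covfunction}; the factor $(t/s)^{\vartheta-\beta(1-\vartheta)}$ becomes $1$. The $\sigma$ part gives $((\sigma^+)^2+(\sigma^-)^2)/2$ directly. The $\rho^2$ coefficient is
\[
\frac{\vartheta(1+2\beta-\vartheta-\vartheta\beta)}{(1-\vartheta)(1+2\beta-2\vartheta-2\vartheta\beta)(\vartheta-\beta+\vartheta\beta)}-\frac{\beta}{(1-\vartheta)(\vartheta-\beta+\vartheta\beta)} .
\]
Write $a=\vartheta-\beta+\vartheta\beta$ and $b=1+2\beta-2\vartheta-2\vartheta\beta$, and note $1+2\beta-\vartheta-\vartheta\beta=b+\vartheta(1+\beta)$. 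The numerator over the common denominator $(1-\vartheta)ab$ is then
\[
\vartheta b+\vartheta^2(1+\beta)-\beta b=ab+\vartheta^2(1+\beta).
\]
Substituting $\vartheta=2p-1$ gives $1-\vartheta=2(1-p)$, $b=4\beta-4p\beta-4p+3$, and $a=(2p-1)(\beta+1)-\beta$. One then checks that $ab+\vartheta^2(1+\beta)=2a(\beta-p+1)$. This is a polynomial identity in $p$ and $\beta$; verifying it, e.g.\ at $\beta=0$ where both sides equal $2(2p-1)(1-p)$, and then in general by expansion, is the only computational step. The factor $a$ then cancels, and the coefficient becomes $(\beta-p+1)/((1-p)(4\beta-4p\beta-4p+3))$, as claimed. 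The cancellation of $a$ is legitimate precisely because $a\neq0$. Indeed, $a=0$ exactly when $p=\tilde p(\beta)$, which explains why this case is excluded here.

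\emph{Exceptional case $p=\tilde p(\beta)$.} Here Theorem \ref{FCLT<ptilde} at $s=t=1$ gives a $\rho^2$ coefficient of $1+2\beta+2\beta^2$, since the logarithmic term vanishes. One checks that the claimed formula gives the same value. With $p=(2\beta+1)/(2\beta+2)$ we have $1-p=1/(2(\beta+1))$, so $\beta-p+1=(2\beta^2+2\beta+1)/(2(\beta+1))$. Also $4\beta-4p\beta-4p+3=3-2(2\beta+1)+4\beta=1$. The quotient is therefore $2\beta^2+2\beta+1$, and the two expressions agree.

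The main obstacle is only the algebraic identity $ab+\vartheta^2(1+\beta)=2a(\beta-p+1)$. I would verify it by expanding both sides as polynomials in $p$ with coefficients in $\beta$. If an identity of exactly this form fails, I would instead directly simplify the rational function in $\vartheta$ and $\beta$, for instance with a computer-algebra check. The probabilistic content is entirely inherited from the two functional theorems.
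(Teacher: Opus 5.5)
Your strategy is the paper's own: the corollary is obtained by evaluating the limiting Gaussian process at $t=1$. Your treatment of the exceptional case $p=\tilde p(\beta)$ is also correct. The generic case, however, contains an algebraic slip at exactly the step you identify as the only computation.

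The numerator $\vartheta b+\vartheta^2(1+\beta)-\beta b$ equals $(\vartheta-\beta)b+\vartheta^2(1+\beta)$, not $ab+\vartheta^2(1+\beta)$. Since $a=\vartheta-\beta+\vartheta\beta$, you have dropped the term $-\vartheta\beta b$. Consequently, the identity $ab+\vartheta^2(1+\beta)=2a(\beta-p+1)$ that you propose to verify is false whenever $\beta>0$ and $p\neq 1/2$. Its two sides differ by $\vartheta\beta b$, which is nonzero in the diffusive regime because $b>0$ there. For instance, at $\beta=1$ the left side is $-24p^2+44p-19$, while the right side is $-8p^2+22p-12$. Your check at $\beta=0$ cannot detect this, because there $a=\vartheta=\vartheta-\beta$. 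Expanding the polynomial, as you planned, would have refuted the identity rather than confirmed it. Your fallback of simplifying the rational function directly would work, but the proposal as written rests on a false equation.

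The repair is immediate. The correct identity is
\[
(\vartheta-\beta)b+\vartheta^2(1+\beta)=a(2\beta+1-\vartheta)=2a(\beta-p+1),
\]
since both sides expand to $\vartheta(2\beta^2+4\beta+1)-\vartheta^2(1+\beta)-\beta(2\beta+1)$. Cancelling $a\neq0$ then gives the stated coefficient $(\beta-p+1)/((1-p)(4\beta-4p\beta-4p+3))$.

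A shorter route avoids the off-diagonal formula \eqref{covfunction} altogether. In the proof of Proposition \ref{prop:finitedim<}, the $\theta_1^2$ coefficient taken from \cite[Theorem 2.3]{LL} already gives the diagonal variance $s\rho^2(2\beta+1-\vartheta)/((1-\vartheta)b)$. Put $s=1$ and use $2\beta+1-\vartheta=2(\beta-p+1)$ and $1-\vartheta=2(1-p)$; the corollary follows at once.
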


\medskip

Note that Corollary \ref{prop:unidim} also covers the exceptional case of $p = \tilde{p}(\beta)$. In the case $\beta=0$, the limiting distribution in Corollary \ref{prop:unidim} is 
$$
\mathcal{N}\Big(0, \frac{(\sigma^+)^2 + (\sigma^-)^2}{2} + \frac{\rho^2}{3-4p} \Big).
$$
The first step in the proof of Theorem \ref{FCLT<} is the convergence of the finite-dimensional distributions, which is the content of the next proposition.

\begin{proposition}\label{prop:finitedim<} Let $0 < s \leq t < \infty$. If $p < \frac{4\beta + 3}{4\beta + 4}$, $p \neq \tilde{p}(\beta)$ and $\max\{(\sigma^+)^2, (\sigma^-)^2, \rho^2\} > 0$, then
$$
\left(\frac{W_{\lfloor ns \rfloor} - \lfloor ns \rfloor\lambda}{\sqrt{n}},\frac{W_{\lfloor nt \rfloor} - \lfloor nt \rfloor\lambda}{\sqrt{n}}  \right)\Longrightarrow \mathcal{N}\Big(\mathbf{0}, \Sigma_{s,t} \Big) , \ \textrm{ as } n \to \infty,
$$
\noindent where $\Sigma_{s,t}$ is a covariance matrix of $(Y_s,Y_t)$ for $Y$ as in the statement of Theorem \ref{FCLT<}.
\end{proposition}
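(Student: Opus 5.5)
We follow the decomposition from the introduction, $W_n-n\lambda=\rho S_n+V_n$ with $V_n=\sum_{k=1}^n(Z_k^{X_k}-\lambda-\rho X_k)$, and obtain the joint limit of the two pieces from a single multidimensional martingale CLT. From \eqref{incr-form},
\[
Z_k^{X_k}-\lambda-\rho X_k=\tfrac{1+X_k}{2}(Z_k^+-\rho^+)+\tfrac{1-X_k}{2}(Z_k^--\rho^-).
\]
This is an $(\mathcal{F}_k)$-martingale difference, because $Z_k^\pm$ is independent of $\mathcal{F}_{k-1}\vee\sigma(X_k)$. Its conditional variance given $\mathcal{F}_{k-1}$ is
\[
\tfrac{1+\EE[X_k|\mathcal{F}_{k-1}]}{2}(\sigma^+)^2+\tfrac{1-\EE[X_k|\mathcal{F}_{k-1}]}{2}(\sigma^-)^2 .
\]
For the AERW part we use Laulin's martingale representation from \cite{LL}. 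Set $a_n=\prod_{k=1}^{n-1}\bigl(1+\tfrac{\vartheta(\beta+1)}{k+\beta}\bigr)$, suitably normalized, with $a_n\asymp n^{\vartheta(\beta+1)-\beta}$. There are two martingales $M_n^{(1)}=\sum_k \mu_k X_k$ and $L_n=a_n^{-1}\sum_k\mu_kX_k$ minus compensators, or equivalently the pair used in \cite{LL}. In terms of these, $S_n$ is a linear combination $S_n=\sum_{k\le n}X_k$ of $\mu_n^{-1}$-weighted martingales. The condition $p\neq\tilde p(\beta)$ is exactly what makes the weights nondegenerate, since the two exponents $\beta$ and $\vartheta(\beta+1)-\beta$ are then distinct.

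The plan has three steps, carried out in order.

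\textbf{Step 1.} Write $(\rho S_{\lfloor ns\rfloor}+V_{\lfloor ns\rfloor},\rho S_{\lfloor nt\rfloor}+V_{\lfloor nt\rfloor})/\sqrt n$, up to $o_P(1)$ errors, as $A_n^\top \mathcal{M}_n$. Here $\mathcal{M}_n$ is the vector martingale $(V_m,\ M^{(1)}_m,\ \text{Laulin's second martingale})$, evaluated at $m=\lfloor ns\rfloor,\lfloor nt\rfloor$, and $A_n$ is a deterministic normalization matrix built from $\mu_n$ and $a_n$.

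\textbf{Step 2.} Apply the martingale CLT for vector martingales with deterministic normalizations, e.g.\ \cite[Theorem 2.11]{MM} or Hall--Heyde. This requires two inputs.
\begin{enumerate}[(i)]
\item Convergence in probability of the normalized predictable quadratic variation matrix. The $V$ diagonal entry gives $\tfrac{s}{2}((\sigma^+)^2+(\sigma^-)^2)$, because $\EE[X_k|\mathcal{F}_{k-1}]\to0$ a.s.\ by \cite[(4.5)]{LL}, and Ces\`aro averaging applies. The cross brackets between $V$ and the AERW martingales have increments $\EE[(\text{$V$-increment})X_k|\mathcal{F}_{k-1}]\cdot(\text{weight})$. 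These equal $\tfrac12\EE[(1+X_k)(Z^+_k-\rho^+)X_k\,|\,\mathcal{F}_{k-1}]+\dots=0$, since the $Z$'s are centered and independent of $X_k$. Hence $V$ is asymptotically orthogonal to the AERW part, and the covariance splits into the $\sigma$ term plus $\rho^2$ times the AERW covariance. The AERW brackets are computed exactly as in \cite{LL}, using $X_k^2=1$ and $\EE[X_k|\mathcal{F}_{k-1}]\to0$.
\item A Lindeberg condition. For the $V$ part this follows from the square-integrability of $Z^\pm_1$: the summands are dominated by $|Z_k^+-\rho^+|+|Z_k^--\rho^-|$, which are i.i.d.\ in $L^2$, so uniform integrability gives Lindeberg. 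For the AERW part the increments are bounded relative to the normalization.
\end{enumerate}

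\textbf{Step 3.} Identify the limit covariance with \eqref{covfunction}. For the $\rho^2$ part this is the covariance of Laulin's limiting Gaussian process for $S$ \cite{LL}. For the $V$ part it is $s\,((\sigma^+)^2+(\sigma^-)^2)/2$ for $s\le t$, since $V$ has asymptotically independent increments. Finally, the Cram\'er--Wold device gives the bivariate statement. The hypothesis $\max\{(\sigma^\pm)^2,\rho^2\}>0$ serves only to rule out a degenerate limit.

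We expect the main obstacle to be Step 1 combined with the bracket computation in (i) for the AERW part. This means tracking Laulin's two-martingale representation of $S_n$ with the $\mu_k$ weights, and checking that the normalized cross-brackets at times $\lfloor ns\rfloor$ and $\lfloor nt\rfloor$ converge to the stated power and constant terms. These involve the exponents $(t/s)^{\vartheta-\beta(1-\vartheta)}$ and the constant $\beta/((1-\vartheta)(\beta-\vartheta-\vartheta\beta))$. If this becomes unwieldy, the fallback is to quote Laulin's FCLT for $S$ directly as a black box. One then proves joint convergence of $(S,V)$ by conditioning: conditionally on the whole $X$-trajectory, $V$ is a sum of independent centered variables with variance $\sum_k\bigl(\tfrac{1+X_k}{2}(\sigma^+)^2+\tfrac{1-X_k}{2}(\sigma^-)^2\bigr)$. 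Normalized by $n$, this variance converges a.s.\ to the deterministic value $s((\sigma^+)^2+(\sigma^-)^2)/2$, because $S_n/n\to0$. A conditional Lindeberg CLT then yields a Gaussian limit for $V$ independent of $S$, and the characteristic functions factor.
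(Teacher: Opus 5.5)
Your proposal is correct, and your fallback is exactly the paper's proof. The paper writes the increments via \eqref{decompZ} and conditions on $\mathcal{G}_\infty$. It then applies Lindeberg--Feller to the conditionally independent sums of $\tilde Z_k^{\pm}(X_k\pm1)$ over the blocks $k\le\lfloor ns\rfloor$ and $\lfloor ns\rfloor<k\le\lfloor nt\rfloor$; the conditional variances converge because $S_n/n\to0$ a.s. The conditional characteristic functions therefore have deterministic a.s.\ limits, and dominated convergence plus \cite[Theorem 2.3]{LL}, used as a black box for $\rho S$, gives the factorized limit.

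Your primary route is genuinely different: a single vector martingale CLT for $V$ jointly with Laulin's martingales, with $\EE[U_k\varepsilon_k\mid\mathcal{F}_{k-1}]=0$ replacing conditional independence. It does close. Use the notation of Section~\ref{sec:fluctuations}, with $A=\vartheta(\beta+1)$, $\delta=A-\beta<1/2$, $\delta\neq0$ and $C=\Gamma(A+1)/\Gamma(\beta+1)$. Then $S_m=N_m+r_mM_m$ holds exactly, with $r_m\sim\frac{A}{\delta C}m^{\delta}$, $\langle N\rangle_m\sim\frac{\beta^2}{\delta^2}m$, $\langle N,M\rangle_m\sim-\frac{\beta C}{\delta(1-\delta)}m^{1-\delta}$ and $\langle M\rangle_m\sim\frac{C^2}{1-2\delta}m^{1-2\delta}$. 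Using $1-\delta=(1+\beta)(1-\vartheta)$, the two-time bracket reproduces exactly the $(t/s)^{\delta}$ coefficient and the constant $-\beta/((1-\vartheta)\delta)$ in \eqref{covfunction}.

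The trade-off is this. Your route re-derives \cite[Theorem 2.3]{LL} with one extra orthogonal coordinate, which is self-contained but heavier. Conditioning uses the AERW limit purely as a black box and gets the independence of the two Gaussian pieces for free. The martingale formulation is, however, the one the paper itself adopts in Section~\ref{sec:fluctuations}, where the future increments $M_\infty-M_n$ must be handled jointly with $V$.

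Several details in your sketch of \cite{LL} need correcting, though none affects the argument, since the fallback is complete on its own.
\begin{itemize}
\item $Y_n=\sum_{k\le n}\mu_kX_k$ is not itself a martingale, since $\EE[Y_{n+1}\mid\mathcal{G}_n]=(1+A/n)Y_n$. Laulin's martingales are $M_n=a_nY_n$, with $a_n=\Gamma(n)\Gamma(A+1)/\Gamma(n+A)\asymp n^{-A}$, and $N_n=S_n-\frac{A}{\delta}\mu_n^{-1}Y_n$. It is $(a_n\mu_n)^{-1}$ that is of order $n^{\delta}$; your product $\prod_{k<n}(1+A/(k+\beta))$ is of order $n^{A}$.
\item The exceptional value $\tilde p(\beta)$ is where $\delta=0$, i.e.\ $\vartheta(\beta+1)=\beta$, so the factor $A/\delta$ in $N_n$ and $r_n$ blows up. It is not where $\beta$ and $\vartheta(\beta+1)-\beta$ coincide.
\item \cite[Theorem 2.11]{MM} is the AERW limit theorem at $p=\tilde p(\beta)$, not a general vector martingale CLT. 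For Step 2, cite \cite{HH80} instead.
\end{itemize}
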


\begin{proof} 
To begin, we have $W_n = \sum_{k=1}^n \mathcal{Z}_k$ and note that the increments $\mathcal{Z}_k$ can be written in the following way:
\begin{align}\label{decompZ}
\mathcal{Z}_k & = \frac{1}{2} \Big[(1 + X_k)Z_{k}^+  + (1 - X_k) Z_{k}^-\Big] = \frac{1}{2} \Big[Z_{k}^+ + X_kZ_{k}^+  +  Z_{k}^- - X_k Z_{k}^-\Big] \nonumber \\
& = \frac{1}{2} \Big[Z_{k}^+ +  Z_{k}^- + X_k(Z_{k}^+ - \rho^+) + \rho^+X_k   - X_k (Z_{k}^- - \rho^-) - \rho^- X_k\Big] \nonumber \\
& = \frac{1}{2} \Big[X_k(\rho^+ - \rho^- ) + Z_{k}^+ + Z_{k}^-  + X_k(Z_{k}^+ - \rho^+) - X_k(Z_k^- - \rho^-)\Big] \nonumber \\
& = \frac{1}{2} \Big[X_k(\rho^+ - \rho^- ) + \tilde{Z}_k^+  + X_k\tilde{Z}_k^+ + \tilde{Z}_k^- - X_k\tilde{Z}_k^-+ (\rho^+ + \rho^-)\Big] \nonumber \\
& = \frac{1}{2} \Big[X_k(\rho^+ - \rho^- ) + \tilde{Z}_k^+(X_k +1) - \tilde{Z}_k^-(X_k - 1) + (\rho^+ + \rho^-)\Big],
\end{align}
where $\tilde{Z}_k^+ = Z_k^+ - \rho^+$ and $\tilde{Z}_k^- = Z_k^- - \rho^-$. Now for any fixed $0 < s \leq t < \infty$, $(\theta_1,\theta_2) \in \mathbb{R}^2$ and $n \in \mathbb{N}$, the joint characteristic function of $\left(\frac{W_{\lfloor ns \rfloor} - \lfloor ns \rfloor\lambda}{\sqrt{n}},\frac{W_{\lfloor nt \rfloor} - \lfloor nt \rfloor\lambda}{\sqrt{n}}  \right)$, which we denote by $\psi_{n,s,t}(\theta_1,\theta_2) = \psi_{n}(\theta_1,\theta_2)$ can be derived in the following way:
\begin{align*}
 \psi_n(\theta_1,\theta_2) = & \EE \Big[ \exp\Big({\textstyle \frac{i\theta_1 (W_{\lfloor ns \rfloor} - \lfloor ns \rfloor\lambda)}{\sqrt{n}} + \frac{i\theta_2 (W_{\lfloor nt \rfloor} - \lfloor nt \rfloor\lambda)}{\sqrt{n}}}\Big)\Big]\\
= & \EE \Big[ \exp \Big({\textstyle \frac{i\theta_1}{2} {\displaystyle \sum_{k=1}^{\lfloor ns \rfloor}} \Big[\frac{X_k}{\sqrt{n}}(\rho^+ - \rho^- ) + \frac{\tilde{Z}_k^+(X_k +1) - \tilde{Z}_k^-(X_k - 1) + (\rho^+ + \rho^-)}{{\sqrt{n}}} - \frac{\rho^+ + \rho^-}{\sqrt{n}}}\Big]\\
& {\textstyle + \frac{i\theta_2}{2}  {\displaystyle \sum_{k=1}^{\lfloor nt \rfloor}} \Big[\frac{X_k}{\sqrt{n}}(\rho^+ - \rho^- ) + \frac{\tilde{Z}_k^+(X_k +1) - \tilde{Z}_k^-(X_k - 1) + (\rho^+ + \rho^-)}{{\sqrt{n}}} - \frac{\rho^+ + \rho^-}{\sqrt{n}}\Big]\Big)\Big]}\\
= & \EE \Big[ \exp \Big( {\textstyle i \rho \theta_1 {\displaystyle \sum_{k=1}^{\lfloor ns \rfloor}} \frac{X_k}{\sqrt{n}} + i \rho \theta_2 {\displaystyle\sum_{k=1}^{\lfloor nt \rfloor}} \frac{X_k}{\sqrt{n}} } \Big) \exp \Big( {\textstyle \frac{i \theta_1}{2} {\displaystyle \sum_{k=1}^{\lfloor ns \rfloor}} \frac{\tilde{Z}_k^+(X_k +1) - \tilde{Z}_k^-(X_k - 1)}{{\sqrt{n}}} }\\
& \qquad {\textstyle + \frac{i \theta_2}{2} {\displaystyle \sum_{k=1}^{\lfloor ns \rfloor}} \frac{\tilde{Z}_k^+(X_k +1) - \tilde{Z}_k^-(X_k - 1)}{{\sqrt{n}}} + \frac{i \theta_2}{2} {\displaystyle \sum_{k=\lfloor ns \rfloor + 1}^{\lfloor nt \rfloor}} \frac{\tilde{Z}_k^+(X_k +1) - \tilde{Z}_k^-(X_k - 1)}{{\sqrt{n}}}} \Big) \Big].
\end{align*}
Let $\mathcal{G}_n = \sigma(X_k; k \leq n)$ be the filtration generated by the AERW and $\mathcal{G}_{\infty} = \sigma(X_1,X_2,...)$. At this point, it is convenient to condition on $\mathcal{G}_{\infty}$ to separate the expectations and hence $\psi_n(\theta_1,\theta_2)$ is equal to
\begin{align*}
& \EE \Big[ \exp \Big( {\textstyle i \rho \theta_1 {\displaystyle \sum_{k=1}^{\lfloor ns \rfloor}} \frac{X_k}{\sqrt{n}} + i \rho \theta_2 {\displaystyle \sum_{k=1}^{\lfloor nt \rfloor}} \frac{X_k}{\sqrt{n}} \Big) \times} \\ 
& \quad \EE \Big[\exp\Big( {\textstyle \frac{i(\theta_1 + \theta_2)}{2} {\displaystyle \sum_{k=1}^{\lfloor ns \rfloor}} \frac{\tilde{Z}_k^+(X_k +1) - \tilde{Z}_k^-(X_k - 1)}{{\sqrt{n}}} + \frac{i \theta_2}{2} {\displaystyle \sum_{k=\lfloor ns \rfloor+1}^{\lfloor nt \rfloor}} \frac{\tilde{Z}_k^+(X_k +1) - \tilde{Z}_k^-(X_k - 1)}{{\sqrt{n}}}} \Big) \Big| {\textstyle \mathcal{G}_{\infty}}\Big] \Big].
\end{align*}
Note that the random variables $\tilde{Z}_k^+(X_k +1)$ and $\tilde{Z}_k^-(X_k - 1)$ are conditionally independent given $\mathcal{G}_{\infty}$, for any $k \in \mathbb{N}$. Then $\psi_n(\theta_1,\theta_2)$ is given by
\begin{align}\label{phi_1}
\EE \Big[ \exp \Big( & {\textstyle i \rho \theta_1 {\displaystyle \sum_{k=1}^{\lfloor ns \rfloor}} \frac{X_k}{\sqrt{n}} + i \rho \theta_2 {\displaystyle \sum_{k=1}^{\lfloor nt \rfloor}} \frac{X_k}{\sqrt{n}} } \Big) \nonumber \\
& \psi^+_{n,s,X}(\theta_1 + \theta_2) \psi^-_{n,s,X}(\theta_1 + \theta_2) \psi^+_{n,t-s,X}(\theta_2) \psi^-_{n,t-s,X}(\theta_2)  \Big],
\end{align}
where 
\begin{align*}
\psi^+_{n,s,X}(\theta_1 + \theta_2) & =  \EE \Big[\exp\Big( {\textstyle \frac{i(\theta_1 + \theta_2)}{2} {\displaystyle \sum_{k=1}^{\lfloor ns \rfloor}} \frac{\tilde{Z}_k^+(X_k +1)}{{\sqrt{n}}} } \Big) \Big| {\textstyle \mathcal{G}_{\infty}} \Big],\\
\psi^-_{n,s,X}(\theta_1 + \theta_2) & =  \EE \Big[\exp\Big( {\textstyle - \frac{i(\theta_1 + \theta_2)}{2} {\displaystyle \sum_{k=1}^{\lfloor ns \rfloor}} \frac{\tilde{Z}_k^-(X_k -1)}{{\sqrt{n}}} } \Big) \Big| {\textstyle \mathcal{G}_{\infty} }\Big],\\
\psi^+_{n,t-s,X}(\theta_2) & =  \EE \Big[\exp\Big( {\textstyle \frac{i\theta_2}{2} {\displaystyle \sum_{k=\lfloor ns \rfloor + 1}^{\lfloor nt \rfloor}} \frac{\tilde{Z}_k^+(X_k +1)}{{\sqrt{n}}} } \Big) \Big| {\textstyle \mathcal{G}_{\infty} } \Big]
\end{align*}
and
$$
\psi^-_{n,t-s,X}(\theta_2) =  \EE \Big[\exp\Big( {\textstyle - \frac{i\theta_2}{2} {\displaystyle \sum_{k=\lfloor ns \rfloor + 1}^{\lfloor nt \rfloor}} \frac{\tilde{Z}_k^-(X_k -1)}{{\sqrt{n}}} } \Big) \Big| {\textstyle \mathcal{G}_{\infty} } \Big].
$$
Given a realization $X = (x_1,x_2,...) \in \{-1,1\}^\mathbb{N}$, set $V_k = (x_k + 1) \tilde{Z}_k^+$. $(V_k)_{k\ge 1}$ is a sequence of independent random variables such that
$$
\operatorname{Var}(V_k) = (x_k + 1)^2 (\sigma^+)^2 \le 4 (\sigma^+)^2.
$$
Moreover
$$
\frac{\operatorname{Var}(\sum_{k=1}^{\lfloor ns \rfloor} V_k)}{n} = \frac{(\sigma^+)^2}{n} \sum_{k=1}^{\lfloor ns \rfloor} (x_k + 1)^2 = \frac{2 \lfloor ns \rfloor (\sigma^+)^2}{n} + \frac{2}{n} \sum_{k=1}^{\lfloor ns \rfloor} x_k.
$$
The last term above converges to zero almost surely by \cite[Theorem 2.1]{LL} and hence, almost surely,
$$
\lim_{n\rightarrow \infty} \operatorname{Var} \Big( \frac{1}{2} \sum_{k=1}^{\lfloor ns \rfloor} \frac{\tilde{Z}_k^+(X_k + 1)}{{\sqrt{n}}} \Big| \mathcal{G}_{\infty} \Big) = \frac{s(\sigma^+)^2}{2},
$$
and, for any $\epsilon > 0$,
$$
\sum_{k=1}^{\lfloor ns \rfloor} \EE \Big[ \Big(\frac{\tilde{Z}_k^+(X_k + 1)}{{2\sqrt{n}}}\Big)^2\mathbf{1}_{\{|\tilde{Z}_k^+(X_k + 1)|> 2 \epsilon \sqrt{n}\}} \Big| \mathcal{G}_{\infty} \Big] \leq \frac{\lfloor ns \rfloor}{n} \EE \big[ (\tilde{Z}_1^+)^2 \mathbf{1}_{\{|\tilde{Z}_1^+| > \epsilon \sqrt{n}\}}\big],
$$
which converges to zero as n goes to infinity since $\tilde{Z}_1^+$ is a square-integrable random variable. Thus, the Lindeberg–Feller Central Limit Theorem yields
$$
\lim_{n\rightarrow \infty} \psi^+_{n,s,X}(\theta_1 + \theta_2) = e^{-\frac{1}{4}\big[(\theta_1 + \theta_2) \sigma^+\sqrt{s}  \big]^2} \quad a.s.
$$
Analogously
\begin{align*}
\lim_{n\rightarrow \infty} \psi^-_{n,s,X}(\theta_1 + \theta_2) & = e^{-\frac{1}{4}\big[(\theta_1 + \theta_2) \sigma^-\sqrt{s}  \big]^2} \quad a.s.,\\
\lim_{n\rightarrow \infty} \psi^+_{n,t-s,X}(\theta_2) & = e^{-\frac{1}{4}\big[\theta_2 \sigma^+\sqrt{t-s}  \big]^2} \quad a.s.,\\
\lim_{n\rightarrow \infty} \psi^-_{n,t-s,X}(\theta_2) & = e^{-\frac{1}{4}\big[\theta_2 \sigma^-\sqrt{t-s}  \big]^2} \quad a.s.
\end{align*}
Then, using the dominated convergence theorem in \eqref{phi_1},
\begin{align}\label{phi_2}
\lim_{n\rightarrow \infty} \psi_n(\theta_1,\theta_2) & = \exp\left\{ - \frac{(\theta_1 + \theta_2)^2s}{2} \left[ \frac{(\sigma^{+})^2 + (\sigma^{-})^2}{2} \right]\right\} \exp\left\{ - \frac{\theta_2^2(t-s)}{2} \left[ \frac{(\sigma^{+})^2 + (\sigma^{-})^2}{2} \right]\right\} \nonumber\\
& \times \lim_{n\rightarrow \infty} \EE \Big[ \exp \Big( i \rho \theta_1 \sum_{k=1}^{\lfloor ns \rfloor} \frac{X_k}{\sqrt{n}} + i \rho \theta_2 \sum_{k=1}^{\lfloor nt \rfloor} \frac{X_k}{\sqrt{n}} \Big) \Big].
\end{align}

Now we have to deal with the last term in \eqref{phi_2}. From \cite[Theorem 2.3]{LL} it follows that the last term in \eqref{phi_2} is equal to
\begin{align*}
\exp\Big\{&-\frac{\rho^2}{2(1-\vartheta)}\Big[ \frac{\theta_1^2 s (2\beta+1-\vartheta)}{1+2\beta-2\vartheta-2\vartheta\beta} + \frac{2\theta_1 \theta_2 s \vartheta(1+2\beta-\vartheta-\vartheta\beta)}{(1+2\beta-2\vartheta-2\vartheta\beta)(\vartheta-\beta+\vartheta\beta)}\Big(\frac{t}{s}\Big)^{\vartheta-\beta(1-\vartheta)}\Big.\Big.\\[0.8em]
&+ \Big. \Big. \frac{2\theta_1 \theta_2 s \beta}{\beta-\vartheta-\vartheta\beta}+\frac{\theta_2^2 t (2\beta+1-\vartheta)}{1+2\beta-2\vartheta-2\vartheta\beta} \Big] \Big\},
\end{align*}
\noindent and going back to \eqref{phi_2}, we have that
\begin{align*}
& \lim_{n\rightarrow \infty} \psi_n(\theta_1,\theta_2)  = \exp\left\{-\frac{1}{2}\left[ \theta_1^2 \left(\frac{s(\sigma^+)^2+s(\sigma^-)^2}{2} + \frac{s\rho^2(2\beta+1-\vartheta)}{(1-\vartheta)(1+2\beta-2\vartheta-2\vartheta\beta)} \right)\right.\right.\\[0.8em]
& + 2\theta_1\theta_2\left(\frac{s(\sigma^+)^2+s(\sigma^-)^2}{2} + \frac{s\rho^2\vartheta(1+2\beta-\vartheta-\vartheta\beta)(1-\vartheta)^{-1}}{(1+2\beta-2\vartheta-2\vartheta\beta)(\vartheta-\beta+\vartheta\beta)}\Big(\frac{t}{s}\Big)^{\vartheta-\beta(1-\vartheta)}\right.\\[0.8em]
& + \left.\frac{s\rho^2\beta(1-\vartheta)^{-1}}{(\beta-\vartheta-\vartheta\beta)}\right) + \theta_2^2 \left. \left.\left(\frac{t(\sigma^+)^2+t(\sigma^-)^2}{2} + \frac{t\rho^2(2\beta+1-\vartheta)}{(1-\vartheta)(1+2\beta-2\vartheta-2\vartheta\beta)}\right)\right] \right\},
\end{align*}
which concludes the proof.
\end{proof}

\medskip

The next step in the proof of Theorem \ref{FCLT<} is to verify tightness, which is addressed in the following proposition. Its statement and proof are also valid for $p = \tilde{p}(\beta)$.

\begin{proposition}
\label{prop:tightness}
If $p < \frac{4\beta+3}{4\beta+4}$ and $\max\{(\sigma^+)^2,(\sigma^-)^2,\rho^2\} > 0$, then
\[
Y^{(n)}_{t} = \frac{W_{\lfloor nt \rfloor} - \lfloor nt \rfloor\lambda}{\sqrt{n}}, \ t\ge 0
\]
is tight on $D([0,\infty))$.
\end{proposition}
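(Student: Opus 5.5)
We use the decomposition \eqref{decompZ}, which gives
\[
W_{\lfloor nt\rfloor}-\lfloor nt\rfloor\lambda=\rho S_{\lfloor nt\rfloor}+\frac12\sum_{k=1}^{\lfloor nt\rfloor}\bigl[\tilde Z_k^+(X_k+1)-\tilde Z_k^-(X_k-1)\bigr]=:\rho S_{\lfloor nt\rfloor}+V_{\lfloor nt\rfloor}.
\]
So $Y^{(n)}_t=\rho S_{\lfloor nt\rfloor}/\sqrt n+V_{\lfloor nt\rfloor}/\sqrt n$. We then prove that each summand is $C$-tight on $D([0,\infty))$, meaning it is tight and every limit point is continuous. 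A sum of two $C$-tight sequences is again $C$-tight, because the modulus of continuity is subadditive. This reduces the proposition to two separate statements, one for each piece.

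\textbf{The AERW part.} For $p<p_c(\beta)$ with $p\neq\tilde p(\beta)$, \cite{LL} proves that $(S_{\lfloor nt\rfloor}/\sqrt n)_{t\ge0}$ converges in distribution on $D([0,\infty))$ to a continuous Gaussian process. Since $D([0,\infty))$ is Polish, Prohorov's theorem turns this convergence into tightness. For the exceptional value $p=\tilde p(\beta)$ we invoke \cite[Theorem 2.11]{MM}, which gives the same kind of functional convergence to a continuous limit. In both cases the jumps of $S_{\lfloor n\cdot\rfloor}/\sqrt n$ are at most $1/\sqrt n$, so the sequence is $C$-tight. If $\rho=0$ this term vanishes and there is nothing to prove.

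\textbf{The increment-noise part.} Set $\xi_k=\frac12[\tilde Z_k^+(X_k+1)-\tilde Z_k^-(X_k-1)]$. Each $\xi_k$ is $\mathcal F_k$-measurable. Because $\tilde Z_k^\pm$ are centred and independent of $\mathcal F_{k-1}\vee\sigma(X_k)$, we have $\EE[\xi_k\mid\mathcal F_{k-1}]=0$. Hence $V_n$ is a square-integrable $(\mathcal F_n)$-martingale, and its conditional variance is
\[
\EE[\xi_k^2\mid\mathcal F_{k-1}]=\tfrac14\EE\bigl[(X_k+1)^2(\sigma^+)^2+(X_k-1)^2(\sigma^-)^2\mid\mathcal F_{k-1}\bigr].
\]
This equals $\frac{(\sigma^+)^2+(\sigma^-)^2}{2}+\frac{(\sigma^+)^2-(\sigma^-)^2}{2}\EE[X_k\mid\mathcal F_{k-1}]$.

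The predictable quadratic variation is then
\[
\langle V\rangle_{\lfloor nt\rfloor}/n\to t\,\frac{(\sigma^+)^2+(\sigma^-)^2}{2}\quad\text{a.s.},
\]
because $\EE[X_k\mid\mathcal F_{k-1}]=\vartheta(\beta+1)\overline X^\mu_{k-1}\to0$ a.s. by \cite[(4.5)]{LL}. For the conditional Lindeberg condition we use $|\xi_k|\le|\tilde Z_k^+|+|\tilde Z_k^-|$, where the right-hand side is i.i.d. and square integrable. This gives
\[
\frac1n\sum_{k\le nt}\EE\bigl[\xi_k^2\mathbf 1_{\{|\xi_k|>\epsilon\sqrt n\}}\mid\mathcal F_{k-1}\bigr]\le t\,\EE\bigl[(|\tilde Z_1^+|+|\tilde Z_1^-|)^2\mathbf 1_{\{|\tilde Z_1^+|+|\tilde Z_1^-|>\epsilon\sqrt n\}}\bigr]\to0 .
\]
The martingale functional CLT (e.g.\ Hall--Heyde or Jacod--Shiryaev, Theorem VIII.3.11) now gives $V_{\lfloor n\cdot\rfloor}/\sqrt n\Rightarrow\sqrt{((\sigma^+)^2+(\sigma^-)^2)/2}\,B$, with $B$ a standard Brownian motion. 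In particular this sequence is $C$-tight. If one prefers a direct argument, Doob's inequality applied to increments, together with the Lindeberg bound controlling the maximal jump, verifies the Aldous criterion.

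\textbf{Main obstacle.} The delicate point is the AERW part at $p=\tilde p(\beta)$, where the martingale decomposition of \cite{LL} is singular. There we must rely on \cite[Theorem 2.11]{MM}, and we have to check that its functional statement applies to the AERW, which requires identifying the AERW as a special case of its framework. If that identification is awkward, the fallback is to verify Kolmogorov's criterion directly. This means bounding
\[
\EE\bigl[(S_{\lfloor nt\rfloor}-S_{\lfloor ns\rfloor})^2\bigr]/n\le C\,|t-s|\,(1+|\log(t/s)|)
\]
via the explicit second moments of the AERW, and then upgrading it to a fourth-moment bound using Burkholder's inequality for the martingale part.
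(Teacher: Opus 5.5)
Your proposal is correct and follows essentially the paper's route. You split off $\rho S_{\lfloor nt\rfloor}/\sqrt n$ and handle it by \cite[Theorem 2.3]{LL}, and at $p=\tilde p(\beta)$ by \cite[Theorem 2.11]{MM}; you prove a martingale FCLT for the increment noise; and you add the pieces, all of which have continuous limits. Two small differences are worth noting. For the identification issue you flag, the paper only has to remove \cite{MM}'s restriction $q=1/2$, which it does by conditioning on $X_1$, so no Kolmogorov-type fallback is needed. For the noise, you treat it as one martingale in the natural filtration $\mathcal F_k$ using $\EE[X_k\mid\mathcal F_{k-1}]\to0$, while the paper conditions on $\mathcal G_\infty$, treats the $\pm$ parts separately in the enlarged filtrations $\mathcal H_k^{\pm}$, and uses $\frac1n\sum_{k\le nt}X_k\to0$; your $C$-tightness phrasing of the final summation step is, if anything, the cleaner formulation.
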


\begin{proof}
Using again the decomposition \eqref{decompZ}, write
\begin{equation}\label{eqprop22}
Y^{(n)}_{t} = \frac{(\rho^+ - \rho^- )}{2\sqrt{n}} \sum_{k=1}^{\lfloor nt \rfloor} X_k + \frac{1}{2} \sum_{k=1}^{\lfloor nt \rfloor} \frac{\tilde{Z}_k^+(X_k +1)}{{\sqrt{n}}} - \frac{1}{2} \sum_{k=1}^{\lfloor nt \rfloor} \frac{\tilde{Z}_k^-(X_k - 1)}{{\sqrt{n}}} , \ \forall \, t\ge 0. 
\end{equation}
From \cite[Theorem 2.3]{LL}, the first term of \eqref{eqprop22} converges in distribution if $p < \frac{4\beta+3}{4\beta+4}$ and $p \neq \tilde{p}(\beta)$. From \cite[Theorem 2.11]{MM}, the same conclusion holds if $p = \tilde{p}(\beta)$. Although \cite{MM} assumes $q = 1/2$ for the Rademacher parameter that determines the first increment of the AERW, the same conclusion holds for every $q \in (0,1)$ since the martingale functional central limit theorem used in the proof of \cite[Theorem 2.11]{MM} can be applied by conditioning on $X_1 = 1$ or $X_1 = -1$, with the same limit in both cases.  Since the limiting processes in both results have continuous sample paths, the corresponding convergence also holds with respect to the uniform topology on compact time intervals. Consequently, the first term of \eqref{eqprop22} is tight with respect to this topology. \

Hence, it remains only to prove that the second and third terms of (2.7) are tight with respect to the uniform topology on compact time intervals. We treat the second term, since the argument for the third one is analogous. Set
\[
J_t^{(n)}
=\frac{1}{\sqrt n}\sum_{k=1}^{\lfloor nt\rfloor}
\tilde Z_k^+(X_k+1),
\qquad t\geq0.
\]
We shall in fact establish the functional convergence of $(J^{(n)}_t)_{t \ge 0}$. Recall that $\mathcal{G}_k = \sigma(X_j: 1 \leq j \leq k)$ and consider the enlarged filtration $\mathcal{H}^+_k = \mathcal{G}_{\infty} \vee \sigma(\tilde{Z}_j^+: 1 \leq j \leq k)$, where $\mathcal{H}_0^+ = \mathcal{G}_{\infty}$. Conditionally on $\mathcal{G}_{\infty}$, the process $J_t^{(n)}$ is a discrete-time  square-integrable martingale with increments
\[
D_k^{(n)} = \frac{1}{\sqrt{n}} \tilde{Z}_k^{+}(X_k+1).
\]
Its predictable quadratic variation is
\begin{align*}
\langle J^{(n)}\rangle_t &= \sum_{k=1}^{\lfloor nt\rfloor} \EE\big[(D_k^{(n)})^2\big|\mathcal{H}_{k-1}^+\big] = \frac{1}{n} \sum_{k=1}^{\lfloor nt\rfloor} \EE\big[(\tilde{Z}_k^+)^2(X_k+1)^2\big|\mathcal{H}_{k-1}^+\big] = \frac{(\sigma^+)^2}{n}\sum_{k=1}^{\lfloor nt\rfloor} (X_k+1)^2\\
&= \frac{2(\sigma^+)^2}{n} \Big( \lfloor nt \rfloor + \sum_{k=1}^{\lfloor nt \rfloor} X_k\Big).
\end{align*}
Since $\frac{1}{n}\sum_{k=1}^{\lfloor nt \rfloor} X_k$ converges to zero almost surely by \cite[Theorem 2.1]{LL}, we have, for every $T > 0$,
\[
\max_{1 \leq m \leq nT} \frac{1}{n}\Big| \sum_{k=1}^m X_k\Big| \xrightarrow[]{n\to \infty} 0, \ a.s.
\]
and hence,
\[
\sup_{0 \leq t \leq T}\big|\langle J^{(n)}\rangle_t - 2(\sigma^+)^2t\big|  \to 0, \ \mbox{a.s.}
\]
Moreover, for every $\epsilon > 0$,
\begin{align*}
\sum_{k=1}^{\lfloor nT\rfloor}\EE\big[(D_k^{(n)})^2\mathbf{1}_{\{|D_k^{(n)}|>\epsilon\}}\big| \mathcal{H}_{k-1}^+\big] &= \sum_{k=1}^{\lfloor nT\rfloor}\frac{(X_k+1)^2}{n} \EE \big[ (\tilde{Z}_k^+)^2 \mathbf{1}_{\{|\tilde{Z}_k^+||X_k+1|> \epsilon \sqrt n\}} \big| \mathcal{H}_{k-1}^+\big]\\
&\leq \frac{4 \lfloor nT\rfloor}{n} \EE \big[ (\tilde{Z}_1^+)^2 \mathbf{1}_{\{2|\tilde{Z}_1^+|> \epsilon \sqrt n\}} \big].
\end{align*}
The right-hand side converges to zero as $n$ goes to infinity by the square integrability of $\tilde{Z}_1^+$, proving the Lindeberg condition. Hence, by the martingale functional central limit theorem (see \cite{W07}, for instance),
\[
\bigl(J_t^{(n)}\bigr)_{t\geq 0}
\Longrightarrow
\bigl(\sqrt{2}\,\sigma^+B_t\bigr)_{t\geq 0}
\]
in $D([0,\infty))$, where $(B_t)_{t\geq 0}$ is a standard Brownian motion. Since the limiting process has continuous sample paths, convergence in the $J_1$ topology also implies convergence in the topology of uniform convergence on compact time intervals. Consequently, the second term in \eqref{eqprop22} is tight with respect to the uniform topology on compact time intervals. The same argument applies to the third term. Since the first term has already been shown to be tight with respect to the same topology, all three components in \eqref{eqprop22} are tight in the uniform topology on compact time intervals. Finally, since addition is continuous with respect to this topology, $(Y_t^{(n)})_{t\geq 0}$ is tight in the uniform topology on compact time intervals, and hence it is tight in $D([0,\infty))$ endowed with the $J_1$ topology.
\end{proof}

\begin{proof}[Proof of Theorem \ref{FCLT<}]
It is straightforward to obtain the convergence of the finite-dimensional distributions analogously to the two-dimensional case from Proposition \ref{prop:finitedim<}. The convergence of the finite-dimensional distributions together with tightness from Proposition \ref{prop:tightness} concludes the proof.
\end{proof}

\medskip

To prove Theorem \ref{FCLT<ptilde} we already have tightness from Proposition \ref{prop:tightness}, but we still need to obtain the convergence of the finite-dimensional distributions, since the result obtained in Proposition \ref{prop:finitedim<} is not valid for $p = \tilde{p}(\beta)$. It will be addressed in the next proposition.

\begin{proposition}\label{prop:finitedim<tilde} Let $0 < s \leq t < \infty$. If $p = \tilde{p}(\beta)$ and $\max\{(\sigma^+)^2, (\sigma^-)^2, \rho^2\} > 0$, then
$$
\left(\frac{W_{\lfloor ns \rfloor} - \lfloor ns \rfloor\lambda}{\sqrt{n}},\frac{W_{\lfloor nt \rfloor} - \lfloor nt \rfloor\lambda}{\sqrt{n}}  \right)\Longrightarrow \mathcal{N}\Big(\mathbf{0}, \tilde{\Sigma}_{s,t} \Big) , \ \textrm{ as } n \to \infty,
$$
\noindent where $\tilde{\Sigma}_{s,t}$ is a covariance matrix of $(\tilde{Y}_s,\tilde{Y}_t)$ for $\tilde{Y}$ as in the statement of Theorem \ref{FCLT<ptilde}.
\end{proposition}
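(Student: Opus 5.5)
The argument follows the proof of Proposition \ref{prop:finitedim<} line by line. Only the input describing the underlying AERW changes. We again use the decomposition \eqref{decompZ} and condition on $\mathcal{G}_\infty$, which factorizes the joint characteristic function $\psi_n(\theta_1,\theta_2)$ exactly as in \eqref{phi_1}.

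The four conditional characteristic functions $\psi^\pm_{n,s,X}(\theta_1+\theta_2)$ and $\psi^\pm_{n,t-s,X}(\theta_2)$ are handled verbatim. Lindeberg--Feller applies conditionally, and the conditional variances converge almost surely because $\frac1n\sum_{k\le \lfloor nt\rfloor}X_k\to0$ a.s. by \cite[Theorem 2.1]{LL}; this law of large numbers does not depend on $p\neq\tilde p(\beta)$. The four factors therefore converge a.s. to the same Gaussian factors as before. Since they are bounded by one, dominated convergence reduces the problem, as in \eqref{phi_2}, to identifying
\[
\lim_{n\to\infty}\EE\Big[\exp\Big(i\rho\theta_1\sum_{k=1}^{\lfloor ns\rfloor}\frac{X_k}{\sqrt n}+i\rho\theta_2\sum_{k=1}^{\lfloor nt\rfloor}\frac{X_k}{\sqrt n}\Big)\Big].
\]
If $\rho=0$ this factor is identically one and the claim follows at once. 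So assume $\rho\neq0$.

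For the AERW term we invoke \cite[Theorem 2.11]{MM}. It gives, at $p=\tilde p(\beta)$, the functional convergence of $(S_{\lfloor nt\rfloor}/\sqrt n)_{t\ge0}$ to a continuous centered Gaussian process with an explicit covariance containing a $\log(t/s)$ term. As noted in the proof of Proposition \ref{prop:tightness}, the assumption $q=1/2$ there is harmless: conditioning on $X_1=\pm1$ gives the same limit. This yields convergence of the pair $(S_{\lfloor ns\rfloor},S_{\lfloor nt\rfloor})/\sqrt n$, so the remaining expectation converges to
\[
\exp\{-\tfrac{\rho^2}{2}(\theta_1^2 K(s,s)+2\theta_1\theta_2K(s,t)+\theta_2^2K(t,t))\},
\]
where $K$ is the covariance from \cite{MM}.

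We expect the main work to be matching this covariance to the statement. That is, we must check that for $0<s\le t$
\[
K(s,t)=s\big[1+2\beta+2\beta^2+\beta(\beta+1)\log(t/s)\big].
\]
If the constant in \cite{MM} is written in different notation, we would verify it directly. At $p=\tilde p(\beta)$ one has $\vartheta=\beta/(\beta+1)$, so $\vartheta-\beta(1-\vartheta)=0$ and $\beta-\vartheta-\vartheta\beta=0$; this is why the formula of Theorem \ref{FCLT<} degenerates. A consistency check is available from the variance. For $s=t$ the bracket is $1+2\beta+2\beta^2$, and Corollary \ref{prop:unidim} at $p=\tilde p$ gives
\[
\frac{\beta-p+1}{(1-p)(4\beta-4p\beta-4p+3)}=(\beta+1)^2+\beta^2=1+2\beta+2\beta^2 .
\]

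Combining the Gaussian factors from the increments with this AERW factor gives the characteristic function of $\mathcal N(\mathbf 0,\tilde\Sigma_{s,t})$, whose entries are $\frac{(\sigma^+)^2+(\sigma^-)^2}{2}(s\wedge\cdot)+\rho^2K$. Lévy's continuity theorem then concludes the proof.
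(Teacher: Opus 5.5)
Your proposal is correct and follows the paper's proof essentially verbatim. The conditioning computation leading to \eqref{phi_3} carries over unchanged, and the AERW factor is read off from \cite[Theorem 2.11]{MM}, with the same remark about $q=1/2$.

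Your check via Corollary \ref{prop:unidim} is only a sanity check, not a proof, for two reasons. That corollary at $p=\tilde p(\beta)$ is itself deduced from this proposition. It also only confirms the diagonal entries and does not see the $\log(t/s)$ term. The off-diagonal entry must therefore come from \cite{MM}, as in the paper. Alternatively, it follows from the limit representation $S_{\lfloor nt\rfloor}/\sqrt n\Rightarrow B_t+\beta\int_0^t u^{-1}B_u\,du$, which holds because $a_n\mu_n\equiv 1$ when $A=\beta$. The covariance of that limit is exactly $s[1+2\beta+2\beta^2+\beta(\beta+1)\log(t/s)]$.
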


\begin{proof}
Let
\[
p=\tilde{p}(\beta)=\frac{2\beta+1}{2(\beta+1)},
\ \text{and hence} \ 
\vartheta=2p-1=\frac{\beta}{\beta+1}.
\]
The computations leading to \eqref{phi_2} use only decomposition \eqref{decompZ}, the independence of the sequences $(Z_k^+)_{k\geq1}$ and $(Z_k^-)_{k\geq1}$ from the AERW, and the conditional independence of the centered random step sizes given the entire AERW trajectory. Therefore, \eqref{phi_2} remains valid at $p=\tilde{p}(\beta)$ and hence, the joint characteristic function of $\left(\frac{W_{\lfloor ns \rfloor} - \lfloor ns \rfloor\lambda}{\sqrt{n}},\frac{W_{\lfloor nt \rfloor} - \lfloor nt \rfloor\lambda}{\sqrt{n}}  \right)$, which we denote by $\tilde{\psi}_{n,s,t}(\theta_1,\theta_2) = \tilde{\psi}_n(\theta_1,\theta_2)$ for $(\theta_1,\theta_2) \in \mathbb{R}^2$, satisfies the following expression

\begin{align}\label{phi_3}
\lim_{n\rightarrow \infty} \tilde{\psi}_n(\theta_1,\theta_2) & = \exp\left\{ - \frac{(\theta_1 + \theta_2)^2s}{2} \left[ \frac{(\sigma^{+})^2 + (\sigma^{-})^2}{2} \right]\right\} \exp\left\{ - \frac{\theta_2^2(t-s)}{2} \left[ \frac{(\sigma^{+})^2 + (\sigma^{-})^2}{2} \right]\right\} \nonumber\\
& \times \lim_{n\rightarrow \infty} \EE \Big[ \exp \Big( i \rho \theta_1 \sum_{k=1}^{\lfloor ns \rfloor} \frac{X_k}{\sqrt{n}} + i \rho \theta_2 \sum_{k=1}^{\lfloor nt \rfloor} \frac{X_k}{\sqrt{n}} \Big) \Big].
\end{align}

Now we have to deal with the last term in \eqref{phi_3}. From \cite[Theorem 2.11]{MM} it follows that the last term in \eqref{phi_3} is equal to 
\begin{align*}
\exp\Big\{&- \frac{\rho^2}{2}\Big[ \theta_1^2s\big[ \beta^2 + (\beta+1)^2\big] + 2\theta_1 \theta_2 s \Big[ \beta^2 + (\beta+1)^2 + \beta(\beta+1)\log \Big( \frac{t}{s} \Big)\Big]\\ 
&+ \theta_2^2t\big[ \beta^2 + (\beta+1)^2\big] \Big] \Big\}.
\end{align*}

Although \cite{MM} assumes $q = 1/2$ for the Rademacher parameter that determines the first increment of the AERW, the same conclusion holds for every $q \in (0,1)$ since  the martingale functional central limit theorem used in the proof of \cite[Theorem 2.11]{MM} can be applied by conditioning on $X_1 = 1$ or $X_1 = -1$, with the same limit in both cases. Then, going back to \eqref{phi_3}, we have that
\begin{align*}
\lim_{n\rightarrow \infty} \tilde{\psi}_n(\theta_1,\theta_2) & = \exp \Big\{ - \frac{1}{2} \Big[ \theta_1^2\Big( \frac{s(\sigma^+)^2 + s(\sigma^-)^2}{2} + s \rho^2 \big[ \beta^2 + (\beta+1)^2\big]\Big)\\
&+ 2 \theta_1 \theta_2 \Big( \frac{s(\sigma^+)^2 + s(\sigma^-)^2}{2} + s\rho^2 \Big[ \beta^2 + (\beta+1)^2 + \beta(\beta+1)\log \Big( \frac{t}{s} \Big)\Big]\Big)\\
&+ \theta_2^2\Big( \frac{t(\sigma^+)^2 + t(\sigma^-)^2}{2} + t \rho^2 \big[ \beta^2 + (\beta+1)^2\big]\Big) \Big] \Big\},
\end{align*}
which concludes the proof. 
\end{proof}

\medskip

\begin{proof}[Proof of Theorem \ref{FCLT<ptilde}]
It is straightforward to obtain the convergence of the finite-dimensional distributions analogously to the two-dimensional case from Proposition \ref{prop:finitedim<tilde}. The convergence of the finite-dimensional distributions together with tightness from Proposition \ref{prop:tightness} concludes the proof.
\end{proof}

Now we will state and prove the functional central limit theorem for the GAERW in the critical case. Notice that the critical functional limit uses the exponential time scale $\lfloor n^t\rfloor$, rather than the linear scale $\lfloor nt\rfloor$.

\begin{theorem}\label{FCLT=} If $p = \frac{4\beta + 3}{4\beta + 4}$ and $\rho^2 > 0$, then
$$
\Big( \frac{W_{\lfloor n^t \rfloor}- \lfloor n^t \rfloor\lambda}{\sqrt{n^t \log(n)}} \Big)_{t\geq0}
$$
converges in distribution on $D([0,\infty))$ to $(\rho (2 \beta + 1)B_t)_{t \geq 0}$, where $B_t$ is a standard Brownian motion.
\end{theorem}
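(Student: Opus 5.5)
We decompose exactly as in the proof of Proposition \ref{prop:tightness}: by \eqref{decompZ},
\[
\frac{W_{\lfloor n^t\rfloor}-\lfloor n^t\rfloor\lambda}{\sqrt{n^t\log n}}
= \frac{\rho}{\sqrt{n^t\log n}}\sum_{k=1}^{\lfloor n^t\rfloor}X_k
+\frac{1}{2\sqrt{n^t\log n}}\sum_{k=1}^{\lfloor n^t\rfloor}\bigl[\tilde Z_k^+(X_k+1)-\tilde Z_k^-(X_k-1)\bigr].
\]
The plan is to show that the first term converges in $D([0,\infty))$ to $(\rho(2\beta+1)B_t)_{t\ge0}$, and that the second term tends to zero in probability, uniformly on compact sets $t\in[0,T]$. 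Slutsky's lemma in $D$ then concludes, since adding a process converging uniformly to $0$ does not change the $J_1$ limit.

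For the first term we invoke the critical functional limit theorem of Laulin \cite[Theorem 2.3]{LL}, case $p=p_c(\beta)$. It states that $\bigl(S_{\lfloor n^t\rfloor}/\sqrt{n^t\log n}\bigr)_{t\ge0}\Rightarrow((2\beta+1)B_t)_{t\ge0}$. Multiplying by the constant $\rho$ gives the claimed limit. We check that the constant $(2\beta+1)$ matches Laulin's normalization; if his result is stated with a different variance, we recompute it from his martingale decomposition. The quadratic variation of the martingale $M_n=a_nS_n-\ldots$ grows like $(2\beta+1)^2\log n$ at criticality, since $\vartheta(\beta+1)-\beta=1/2$.

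For the second term, write it as $\frac{1}{2}(J_{\lfloor n^t\rfloor}-K_{\lfloor n^t\rfloor})/\sqrt{n^t\log n}$, with $J_m=\sum_{k\le m}\tilde Z_k^+(X_k+1)$ and $K_m$ defined analogously. Conditionally on $\mathcal{G}_\infty$, $(J_m)$ is a martingale with increments of variance at most $4(\sigma^+)^2$. The one subtlety is the point $t=0$: there the normalization $\sqrt{n^t\log n}$ does not beat $\sqrt{n^t}$ uniformly, because $n^t$ is small near $0$. We handle this by Doob's inequality on dyadic-type blocks. For $t\in[0,T]$,
\[
\sup_{t\le T}\frac{|J_{\lfloor n^t\rfloor}|}{\sqrt{n^t\log n}}\le \frac{1}{\sqrt{\log n}}\sup_{m\ge1}\frac{|J_m|}{\sqrt{m}}\wedge\ldots
\]
This crude bound fails because $\sup_m |J_m|/\sqrt m$ is infinite (LIL). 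Instead we split $\{1,\dots,n^T\}$ into blocks $[2^j,2^{j+1})$. Doob's $L^2$ inequality gives $\EE\max_{m<2^{j+1}}J_m^2\le 16(\sigma^+)^2 2^{j+1}$. A union bound over the $O(T\log n)$ blocks, at threshold $\epsilon\sqrt{2^j\log n}$, then gives probability at most $C\,T\log n/(\epsilon^2\log n)$, which is not small.

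So we refine: we use the LIL-type bound $|J_m|=O(\sqrt{m\log\log m})$ a.s., which follows from the martingale LIL or from Kronecker's lemma applied to $\sum J$-increments$/\sqrt{k\log k}$, the latter having summable variances $\sum 1/(k\log^2k)\cdot$const, hence $|J_m|=o(\sqrt{m}\log m)$. The cleanest route is the Kronecker step: $\sum_k \Delta J_k/\sqrt{k}\,\log k$ converges a.s. since $\sum 1/(k\log^2 k)<\infty$, so $|J_m|\le C(\omega)\sqrt m\log m$. That is still too big against $\sqrt{\log n}$, so instead we use $\sqrt{k}(\log k)^{1/2+\eta}$, which gives $|J_m|\le C(\omega)\sqrt m(\log m)^{1/2+\eta}$. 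This also fails when $m\approx n^T$.

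Hence the main obstacle is uniformity near $t$ large relative to the $\log$ gain. We settle on the following. For $t\ge\delta$, the supremum over $m\in[n^\delta,n^T]$ of $|J_m|/\sqrt{m\log n}$ is bounded using blocks $[2^j,2^{j+1})$ with threshold $\epsilon\sqrt{2^j\log n}$ and the fourth-moment-free estimate via Doob. For each block, Doob plus Chebyshev gives probability at most $C/(\epsilon^2\log n)$. Summing over the $\sim T\log_2 n$ blocks fails, so we replace Chebyshev with the Lindeberg-truncated exponential (Bernstein/Freedman) inequality for the conditional martingale. Truncating $\tilde Z_k^\pm$ at level $\sqrt{k}/\log k$, the truncated part has a Bernstein tail $\exp(-c\epsilon^2\log n)=n^{-c\epsilon^2}$, which is summable against $\log n$ blocks. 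The removed tails contribute, in $L^1$, at most $\sum_{k\le n^T}\EE[|\tilde Z_1|\mathbf 1_{|\tilde Z_1|>\sqrt k/\log k}]/\sqrt{k\log n}$, which is $o(\sqrt{\log n})\cdot(\log n)^{-1/2}$-small by square integrability. For $t<\delta$, the crude Doob bound gives $\EE\sup_{m\le n^\delta}J_m^2/\log n\le Cn^\delta/\log n$, and we divide by $\sqrt{n^t}\ge1$. This suggests first letting $n\to\infty$ with the normalization restricted to $t\ge\delta$, then $\delta\to0$ using continuity of the limit at $0$. The tail-truncation bookkeeping in this step is where we expect most of the work.
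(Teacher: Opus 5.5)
Your decomposition and your treatment of the leading term agree with the paper. The paper invokes the critical functional theorem \cite[Theorem 2.6]{LL}; Theorem 2.3 there is the diffusive one, so check that reference. The gap is in the second term: your proposal never establishes that it vanishes uniformly on $[0,T]$. You name the right tool, $|J_m|=O(\sqrt{m\log\log m})$ a.s., but then conflate it with the much weaker Kronecker bound and abandon it. Applied directly, it finishes the proof, and this is exactly the paper's argument. Let $\tau_j^+$ be the $j$th time with $X_k=1$ and $N_m^+\le m$ the number of such times up to $m$. Conditionally on $\mathcal{G}_\infty$, $J_m=2\sum_{j\le N_m^+}\tilde Z^+_{\tau_j^+}$, and the variables $(\tilde Z^+_{\tau_j^+})_{j\ge1}$ are i.i.d., centered and square integrable. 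The Hartman--Wintner law of the iterated logarithm therefore gives an a.s.\ finite $C$ with $|J_m|\le C\sqrt{m\log\log(m\vee e^e)}$ for all $m\ge1$. Since $\lfloor n^t\rfloor\le n^t$,
\[
\sup_{0\le t\le T}\frac{|J_{\lfloor n^t\rfloor}|}{\sqrt{n^t\log n}}\le C\sqrt{\frac{\log\log(n^T\vee e^e)}{\log n}}\longrightarrow 0\quad\text{a.s.},
\]
and likewise for the minus-labeled sum. There is no special difficulty at $t=0$: the gain $\sqrt{\log n}$ is uniform in $t$ and only has to beat $\sqrt{\log\log n}$.

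The block/Bernstein scheme you settle on instead does not close as written, for two reasons.

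First, consider your truncation level $\sqrt k/\log k$. The quantity $\sum_{k\le N}k^{-1/2}\EE\bigl[|\tilde Z_1|\mathbf 1_{\{|\tilde Z_1|>\sqrt k/\log k\}}\bigr]$ is comparable to $\EE\bigl[\tilde Z_1^2\log|\tilde Z_1|\,;\,|\tilde Z_1|\le \sqrt N/\log N\bigr]$. Square integrability alone only makes this $o(\log N)$, and for suitable square-integrable laws it is much larger than $\sqrt{\log N}$. Hence your $L^1$ bound on the removed tails, with $N=n^T$, need not vanish. Truncating at $\sqrt{k/\log k}$ instead, or controlling the recentred tail martingale blockwise in $L^2$, would repair this.

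Second, for $t<\delta$ your estimate $\EE\sup_{m\le n^\delta}J_m^2/\log n\le Cn^\delta/\log n$ diverges for fixed $\delta$. Continuity of the limiting process at $0$ gives no control of the prelimit there. What you need is
\[
\limsup_n\PP\Bigl(\sup_{t<\delta}\frac{|J_{\lfloor n^t\rfloor}|}{\sqrt{n^t\log n}}>\epsilon\Bigr)\le \frac{C\delta}{\epsilon^2},
\]
which Kolmogorov's inequality on dyadic blocks gives when applied to $|J_m|/\sqrt m$ rather than to $|J_m|$.

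With both repairs your route works, but it is far heavier than the one-line LIL bound you had already written down.
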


\medskip

\begin{corollary}\label{prop:criticalcase} If $p = \frac{4\beta + 3}{4\beta + 4}$ and $\rho^2 > 0$, then
$$
\frac{W_{n} - n\lambda}{\sqrt{n\log n}} \Longrightarrow \mathcal{N}(0, (\rho(2\beta+1))^2) , \ \textrm{ as } n \to \infty.
$$
\end{corollary}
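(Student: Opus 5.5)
The corollary should follow by evaluating the functional limit of Theorem \ref{FCLT=} at $t=1$. Projection onto the coordinate $t=1$ is continuous at every path with continuous sample paths, and the limit $(\rho(2\beta+1)B_t)_{t\ge0}$ is almost surely continuous. The continuous mapping theorem then gives
\[
\frac{W_{\lfloor n^1\rfloor}-\lfloor n^1\rfloor\lambda}{\sqrt{n\log n}}\Longrightarrow \rho(2\beta+1)B_1\sim\mathcal N\bigl(0,(\rho(2\beta+1))^2\bigr),
\]
which is exactly the claim, since $\lfloor n^1\rfloor=n$.

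Because the paper has so far only stated Theorem \ref{FCLT=}, I would also check the one-dimensional statement directly, using the decomposition \eqref{decompZ} as in Proposition \ref{prop:finitedim<}. Write
\[
\frac{W_n-n\lambda}{\sqrt{n\log n}}=\rho\,\frac{S_n}{\sqrt{n\log n}}+\frac{1}{2\sqrt{n\log n}}\sum_{k=1}^n\bigl[\tilde Z_k^+(X_k+1)-\tilde Z_k^-(X_k-1)\bigr].
\]
The second term is a martingale, conditionally on $\mathcal G_\infty$, with conditional variance at most $n\bigl((\sigma^+)^2+(\sigma^-)^2\bigr)$. Dividing by $n\log n$ sends this to $0$, so the second term tends to $0$ in $L^2$. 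For the first term I would invoke the critical-case CLT of Laulin \cite{LL}: $S_n/\sqrt{n\log n}\Rightarrow\mathcal N(0,(2\beta+1)^2)$ at $p=p_c(\beta)$. Slutsky's lemma then finishes the argument.

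The main obstacle is confirming the variance constant $(2\beta+1)^2$ in \cite{LL}'s critical theorem, or equivalently the diffusion constant of the limit in Theorem \ref{FCLT=}. If this has to be verified by hand, I would use the martingale $M_n=a_nS_n+\dots$ from \cite{LL}, whose bracket grows like $\sum_k a_k^2$ with $a_k\sim k^{-1/2}$ at criticality. That yields the $\log n$ factor, and the computation of the prefactor is routine.
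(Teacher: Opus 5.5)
Your argument is the paper's: the corollary is Theorem \ref{FCLT=} evaluated at $t=1$ via the continuous mapping theorem (valid because the limit is continuous). Your direct check reproduces the proof of that theorem, except that a conditional $L^2$ bound replaces the law of the iterated logarithm for the step-size term, which is enough for a one-dimensional statement. One minor correction to your by-hand sketch: for $\beta>0$ the martingale is $M_n=a_nY_n$ with $Y_n=\sum_{k\le n}X_k\mu_k$ and $a_k\sim\Gamma(A+1)k^{-(\beta+1/2)}$, so it is $a_k\mu_k\sim Ck^{-1/2}$ that produces the $\log n$, and the constant $(2\beta+1)^2=(A/\delta)^2$ enters through $r_n$ in $S_n=N_n+r_nM_n$.
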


\medskip

\begin{proof}[Proof of Theorem \ref{FCLT=}]

As in the proof of Theorem \ref{FCLT<}, set
$$
\hat Y^{(n)}_{t} = \frac{W_{\lfloor n^t \rfloor} - \lfloor n^t \rfloor\lambda}{\sqrt{n^t \log(n)}} , \ t\ge 0,
$$
and use decomposition \eqref{decompZ} to write
$$
\hat Y^{(n)}_{t} = \frac{\rho}{\sqrt{n^t \log(n)}} \sum_{k=1}^{\lfloor n^t \rfloor} X_k + \frac{1}{2 \sqrt{\log(n)}} \Big( \sum_{k=1}^{\lfloor n^t \rfloor} \frac{\tilde{Z}_k^+(X_k +1)}{{\sqrt{n^t}}} -  \sum_{k=1}^{\lfloor n^t \rfloor} \frac{\tilde{Z}_k^-(X_k - 1)}{{\sqrt{n^t}}} \Big),
$$
for all $t \ge 0$. From \cite[Theorem 2.6]{LL} we have that the first term above converges to a Brownian motion with diffusion coefficient $(\rho(2\beta+1))^2$. It remains to show that 
\begin{align}\label{eq:teo2.3}
\frac{1}{2\sqrt{\log(n)}}\sum_{k=1}^{\lfloor n^t \rfloor} \frac{\tilde{Z}_k^+(X_k +1)}{{\sqrt{n^t}}} -  \frac{1}{2\sqrt{\log(n)}}\sum_{k=1}^{\lfloor n^t \rfloor} \frac{\tilde{Z}_k^-(X_k - 1)}{{\sqrt{n^t}}}
\end{align}
converges to zero uniformly on compact intervals. We will prove that for the first term of \eqref{eq:teo2.3} and the argument for the second term is analogous. Let
\[
N_m^+ = \sum_{k=1}^m \mathbf{1}_{\{X_k=1\}},
\]
and denote by $\tau_j^+$ the $j$th time at which $X_k=1$. Then, if $N_m^+ = 0$, $\sum_{k=1}^m\tilde{Z}_k^+(X_k +1)$ is equal to zero and if $N_m^+ > 0$,
\[
\sum_{k=1}^{m} \tilde{Z}_k^+(X_k +1) = 2\sum_{j=1}^{N_m^+} \tilde{Z}_{\tau_j^+}
\]
Conditionally on $\mathcal{G}_\infty$, the sequence $(\tilde{Z}_{\tau_j^+})_{j \geq 1}$ is an i.i.d. sequence with mean zero and finite variance. Therefore, by the law of the iterated logarithm,
\[
\Big| \sum_{j=1}^{N_m^+} \tilde{Z}_{\tau_j^+} \Big| = \mathcal{O}\big(\sqrt{N_m^+ \log \log N_m^+}\big), \ \mbox{a.s}.
\]
Since $N_m^+ \leq m$, it follows that $\sum_{j=1}^{N_m^+} \tilde{Z}_{\tau_j^+}$ is of order at most $\sqrt{m \log \log m}$. Hence, there exists an almost surely finite random variable $C$ such that, for every $T > 0$,
\[
\sup_{0 < t \leq T} \left| \frac{1}{2\sqrt{\log(n)}}\sum_{k=1}^{\lfloor n^t \rfloor} \frac{\tilde{Z}_k^+(X_k +1)}{{\sqrt{n^t}}}\right| = \sup_{0 < t \leq T} \frac{1}{\sqrt{\log(n)}} \left| \sum_{k=1}^{N_{\lfloor n^t \rfloor}^+} \frac{\tilde{Z}_{\tau_j^+}}{{\sqrt{n^t}}} \right| \leq C \frac{\sqrt{n^t\log \log (n^t \vee e^e)}}{\sqrt{n^t \log n}},
\]
which converges to zero almost surely as $n$ goes to infinity, concluding the proof.

\end{proof}

\medskip

We next state and prove the functional convergence for the GAERW in the superdiffusive case.

\begin{theorem}\label{FCLT>} If $p > \frac{4\beta + 3}{4\beta + 4}$, then
$$
\Big( \frac{W_{\lfloor nt \rfloor} - \lfloor nt \rfloor\lambda}{n^{\vartheta(\beta+1)-\beta}} \Big)_{t \geq 0}
$$
converges in distribution on $D([0,\infty))$ to $(t^{\vartheta(\beta+1)-\beta} \rho \mathcal{L}_{q,\beta})_{t\ge 0}$, where $\mathcal{L}_{q,\beta}$ is a non-degenerate random variable. 
\end{theorem}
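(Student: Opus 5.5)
As in the proofs of Theorems \ref{FCLT<} and \ref{FCLT=}, we use decomposition \eqref{decompZ} and write, with $\delta=\vartheta(\beta+1)-\beta$,
\[
\frac{W_{\lfloor nt\rfloor}-\lfloor nt\rfloor\lambda}{n^{\delta}}
=\frac{\rho}{n^{\delta}}\sum_{k=1}^{\lfloor nt\rfloor}X_k
+\frac{1}{2n^{\delta}}\sum_{k=1}^{\lfloor nt\rfloor}\tilde Z_k^+(X_k+1)
-\frac{1}{2n^{\delta}}\sum_{k=1}^{\lfloor nt\rfloor}\tilde Z_k^-(X_k-1).
\]
The plan is to show that the first term converges to $(\rho t^{\delta}\mathcal{L}_{q,\beta})_{t\ge0}$ and that the other two terms tend to zero uniformly on compact time intervals, almost surely. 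Because the limit has continuous paths, these statements hold in the uniform-on-compacts topology. Addition is continuous in that topology, so the sum converges there, and hence in $D([0,\infty))$ with the $J_1$ topology.

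For the first term we invoke the superdiffusive functional limit theorem of \cite{LL} (their Theorem 2.8). It gives the convergence of $\bigl(n^{-\delta}S_{\lfloor nt\rfloor}\bigr)_{t\ge0}$ to $(t^{\delta}\mathcal{L}_{q,\beta})_{t\ge0}$, and it also gives that $\mathcal{L}_{q,\beta}$ is non-degenerate. In fact \cite{LL} shows that $S_n/n^{\delta}\to\mathcal{L}_{q,\beta}$ almost surely and in $L^2$. If only this one-dimensional almost sure convergence were available, we would upgrade it to almost sure uniform convergence on $[0,T]$ by the usual argument:
\[
\sup_{\varepsilon\le t\le T}\Bigl|\frac{S_{\lfloor nt\rfloor}}{n^{\delta}}-t^{\delta}\mathcal{L}\Bigr|
\le \sup_{m\ge \lfloor n\varepsilon\rfloor}\Bigl|\frac{S_m}{m^{\delta}}-\mathcal{L}\Bigr|\,T^{\delta}
+|\mathcal{L}|\sup_{t\le T}\Bigl|\frac{\lfloor nt\rfloor^{\delta}}{n^{\delta}}-t^{\delta}\Bigr|.
\]
On $[0,\varepsilon]$ we would use the bound $\sup_{m\le n\varepsilon}|S_m|/n^{\delta}\le \varepsilon^{\delta}\sup_m|S_m|/m^{\delta}$, and then let $\varepsilon\to0$. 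Multiplying by $\rho$ handles the case $\rho\neq0$; when $\rho=0$ the first term vanishes identically.

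For the remaining two terms we repeat the argument from the proof of Theorem \ref{FCLT=}. Conditionally on $\mathcal{G}_\infty$, the sum $\sum_{k\le m}\tilde Z_k^+(X_k+1)=2\sum_{j\le N_m^+}\tilde Z^+_{\tau_j^+}$ is a sum of i.i.d. centered square-integrable variables. The law of the iterated logarithm then bounds it by $C\sqrt{m\log\log m}$ almost surely. It follows that
\[
\sup_{t\le T}\frac{1}{n^{\delta}}\Bigl|\sum_{k\le\lfloor nt\rfloor}\tilde Z_k^+(X_k+1)\Bigr|\le C\,\frac{\sqrt{nT\log\log(nT\vee e^e)}}{n^{\delta}}\to0.
\]
This tends to zero because $p>p_c(\beta)$ is equivalent to $\delta>1/2$; indeed, $\delta=1/2$ exactly when $\vartheta=(2\beta+1)/(2\beta+2)$, i.e. $p=p_c(\beta)$. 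The same argument applies to the $\tilde Z^-$ term.

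The main obstacle is the first term: we must make sure that the cited result from \cite{LL} gives functional convergence, or almost sure convergence of $S_n/n^{\delta}$, for general $q\in(0,1)$ and $\beta\ge0$, and that non-degeneracy of $\mathcal{L}_{q,\beta}$ is included. If \cite{LL} only provides convergence of finite-dimensional laws, the fallback is the martingale $M_n=a_nS_n$ with the normalizing constants $a_n\sim n^{-\delta}$ from \cite{LL}. This martingale is bounded in $L^2$ when $\delta>1/2$, so it converges almost surely, and the deterministic interpolation above then yields uniform convergence.
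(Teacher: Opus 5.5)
Your main argument is the paper's proof. It uses the same decomposition \eqref{decompZ} and cites the superdiffusive functional limit theorem of \cite{LL} for the term $\rho S_{\lfloor nt\rfloor}/n^{\delta}$. The paper cites it as \cite[Theorem 2.7]{LL} and notes that the exponent printed there must be read as $\vartheta(\beta+1)-\beta$, which is exactly the check you flag as the main obstacle. The step-size terms are then killed by the law-of-the-iterated-logarithm bound from the proof of Theorem \ref{FCLT=}, using $\delta>1/2$.

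Your fallback, which you do not actually need, contains an error that would make it fail for $\beta>0$. In that case $a_nS_n$ is not a martingale for any deterministic $a_n$. The reason is that $\EE[S_{n+1}\mid\mathcal G_n]=S_n+\frac{A}{n\mu_{n+1}}Y_n$ depends on the weighted sum $Y_n=\sum_{k\le n}\mu_kX_k$, not on $S_n$. Moreover, the constants of \cite{LL} satisfy $a_n\sim\Gamma(A+1)n^{-A}$ with $A=\vartheta(\beta+1)=\delta+\beta$, not $n^{-\delta}$. The correct route, used in the proof of Theorem \ref{Fluctuations}, is the decomposition $S_n=N_n+r_nM_n$, where:
\begin{enumerate}
\item $M_n=a_nY_n$ is bounded in $L^2$, because $a_n\mu_n\sim Cn^{-\delta}$ and $\delta>1/2$;
\item $r_n\sim Bn^{\delta}$;
\item $N_n$ is a second martingale with increments bounded by $2\beta/\delta$, so $N_n/n^{\delta}\to0$ almost surely.
\end{enumerate}
As written, your fallback is valid only for $\beta=0$.
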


\medskip

\begin{corollary}\label{prop:superdiffusivecase} If $\frac{4\beta + 3}{4\beta + 4} < p < 1$ then
$$
\frac{W_{n} - n\lambda}{n^{\vartheta (\beta+1)-\beta}} \Longrightarrow \rho \mathcal{L}_{q,\beta} , \ \textrm{ as } n \to \infty,
$$
\noindent where $\mathcal{L}_{q,\beta}$ is a non-degenerate random variable. 
\end{corollary}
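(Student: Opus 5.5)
The statement immediately above is Corollary \ref{prop:superdiffusivecase}, which is the one-dimensional marginal of Theorem \ref{FCLT>} at $t=1$. I would therefore prove Theorem \ref{FCLT>} and obtain the corollary from the continuity of the evaluation map at a continuity point. Alternatively, I would prove the corollary directly by the same decomposition.

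\textbf{Step 1: decomposition.} As in the proof of Theorem \ref{FCLT=}, I use \eqref{decompZ} to write
\[
\frac{W_n-n\lambda}{n^{\delta}}=\rho\,\frac{S_n}{n^{\delta}}+\frac{1}{2n^{\delta}}\sum_{k=1}^{n}\bigl(\tilde Z_k^+(X_k+1)-\tilde Z_k^-(X_k-1)\bigr),
\]
where $\delta=\vartheta(\beta+1)-\beta$.

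\textbf{Step 2: the memory term.} By \cite{LL}, in the regime $p>p_c(\beta)$ one has $S_n/n^{\delta}\to\mathcal{L}_{q,\beta}$ almost surely, hence in distribution. The limit $\mathcal{L}_{q,\beta}$ is non-degenerate, since it has positive variance by \cite{LL}. For the functional version I would invoke the superdiffusive functional theorem of \cite{LL}, which gives $S_{\lfloor nt\rfloor}/n^{\delta}\to t^{\delta}\mathcal{L}_{q,\beta}$ uniformly on compact sets.

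\textbf{Step 3: the noise term.} I would reuse the law of the iterated logarithm argument from the proof of Theorem \ref{FCLT=}. Conditionally on $\mathcal{G}_\infty$, the sum $\sum_{k\le m}\tilde Z_k^+(X_k+1)$ equals $2\sum_{j\le N_m^+}\tilde Z_{\tau_j^+}^+$, which is $\mathcal{O}(\sqrt{m\log\log m})$ almost surely; the same holds for the minus part. The key point is that $p>p_c(\beta)$ means exactly $\delta>1/2$, since $\delta-\tfrac12=(\beta+1)(2p-1)-\beta-\tfrac12>0$. Hence $\sqrt{n\log\log n}/n^{\delta}\to0$. In the functional version, the supremum over $t\le T$ is bounded by $C\sqrt{nT\log\log(nT)}/n^{\delta}\to0$.

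\textbf{Step 4: conclusion.} By Slutsky's lemma, $(W_n-n\lambda)/n^{\delta}\to\rho\mathcal{L}_{q,\beta}$, in fact almost surely, which is stronger than the stated convergence in distribution. In the functional case, a uniform-on-compacts limit with a continuous limiting path gives convergence in $J_1$.

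\textbf{Main obstacle.} The only delicate point is Step 3. The almost sure bound must be obtained from the conditional law of the iterated logarithm applied to the time-changed i.i.d. sequence $(\tilde Z_{\tau_j^+}^+)_{j\ge1}$. This needs a justification that the sequence is i.i.d. given $\mathcal{G}_\infty$, which holds because the $Z$'s are independent of the AERW. A simpler alternative is an $L^2$ maximal bound: Doob's inequality gives $\EE\bigl[\max_{m\le nT}\bigl|\sum_{k\le m}\tilde Z_k^+(X_k+1)\bigr|^2\bigr]\le 16nT(\sigma^+)^2$, which yields convergence to zero in probability after dividing by $n^{2\delta}$. That suffices for convergence in distribution.
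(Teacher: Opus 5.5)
Your proposal is correct and follows essentially the paper's route: the corollary is the $t=1$ marginal of Theorem \ref{FCLT>}, which the paper proves via decomposition \eqref{decompZ}, the superdiffusive convergence of $S_n/n^{\delta}$ from \cite{LL}, and the law-of-the-iterated-logarithm bound from the proof of Theorem \ref{FCLT=}, which suffices because $\delta>1/2$. Your Doob-inequality alternative for the noise term is also valid, and it is close in spirit to the $L^2$/Kronecker argument the paper uses in Section \ref{sec:fluctuations} to upgrade the convergence to almost sure.
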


\medskip

\begin{remark}
Regarding the known distributional properties currently available for $\mathcal{L}_{q,\beta}$, \cite{LL} proves that $\mathcal{L}_{q,\beta}$ is square-integrable and provides explicit expressions for its first two moments. Moreover, the symmetry of the construction gives  $\mathcal{L}_{q,\beta} \overset{d}{=} (2\xi_q -1) \mathcal{L}_{1,\beta}$, where $\xi_q$ is a Bernoulli random variable with parameter $q$ independent of $\mathcal{L}_{1,\beta}$. In the particular case of the ERW ($\beta = 0$), results from \cite{GLR,GLR24} show that it is non-Gaussian and has a positive, bounded and smooth density supported on $\mathbb{R}$. In this particular case, since \(L_{q,0}\) is non-Gaussian and has finite variance, it cannot have a non-degenerate stable distribution. Moreover, its sub-Gaussian tail behavior, together with its non-Gaussianity, rules out infinite divisibility; see \cite[Corollary 9.9]{SH}.\

Whenever $\rho \neq 0$, the limiting process $(t^{\vartheta(\beta+1)-\beta} \rho \mathcal{L}_{q,\beta})_{t\ge 0}$ is not a Lévy process, since its increments are dependent multiples of the same non-degenerate random variable.
\end{remark}


\begin{proof}[Proof of Theorem \ref{FCLT>}]
Set
$$
\overline Y^{(n)}_{t} = \frac{W_{\lfloor nt \rfloor} - \lfloor nt \rfloor\lambda}{n^{\vartheta(\beta+1)-\beta}} , \ t\ge 0
$$
and use decomposition \eqref{decompZ} to write
\begin{align}\label{eq:teo2.4}
\overline Y^{(n)}_{t} = \frac{\rho}{n^{\vartheta(\beta+1)-\beta}} \sum_{k=1}^{\lfloor nt \rfloor} X_k + \frac{1}{2 n^{\vartheta(\beta+1)-\beta-\frac 12}} \Big( \sum_{k=1}^{\lfloor nt \rfloor} \frac{\tilde{Z}_k^+(X_k +1)}{{\sqrt{n}}} -  \sum_{k=1}^{\lfloor nt \rfloor} \frac{\tilde{Z}_k^-(X_k - 1)}{{\sqrt{n}}} \Big).
\end{align}
Noting that $\vartheta(\beta+1)-\beta-\frac 12 > 0$, we can follow the same steps as in the proof of Theorem \ref{FCLT=}. For the first term in
\eqref{eq:teo2.4}, we use \cite[Theorem 2.7]{LL}, with the exponent $a(\beta+1)$ printed there corrected to $a(\beta+1)-\beta$ (that is, $\vartheta(\beta+1)-\beta$ in our
notation), as required by \cite[(2.12) and (4.21)]{LL}. Hence, the
first term converges in distribution in $D([0,\infty))$ to
$(t^{\vartheta(\beta+1)-\beta} \rho \mathcal{L}_{q,\beta})_{t\ge 0}$. For the second term, a straightforward adaptation of the upper bound
used in the proof of Theorem \ref{FCLT=} shows that it converges to
zero uniformly on compact time intervals.
\end{proof}

\section{Fluctuations of the GAERW in the superdiffusive regime}\label{sec:fluctuations}

In this section, we strengthen Corollary \ref{prop:superdiffusivecase} and show that Gaussian fluctuations hold for the GAERW in the superdiffusive regime, as they do for the ERW in \cite{KT19}. 

\begin{theorem}\label{Fluctuations}
Let \((W_n)_{n\geq 1}\) be the GAERW. Assume that \(\beta\geq 0\),
\[
    \frac{4\beta+3}{4(\beta+1)}<p<1,
\]
and that \(Z_1^+\) and \(Z_1^-\) are square-integrable. Set
\[
    A=\vartheta(\beta+1),
    \qquad
    \delta=A-\beta,
\]
where \(\vartheta=2p-1\). Then there exists a non-degenerate random variable \(L_{q,\beta}\) such that
\begin{equation}\label{LLN-superdiffusive}
    \frac{W_n-n\lambda}{n^\delta}
    \xrightarrow[n \to \infty]{} \rho \mathcal{L}_{q,\beta}
    \qquad\text{a.s.}
\end{equation}
Moreover,
\begin{equation}\label{Fluc-superdiffusive}
    \frac{W_n-n\lambda-\rho \mathcal{L}_{q,\beta}n^\delta}{\sqrt n}
    \xrightarrow[n \to \infty]{\mathrm d}\mathcal N(0,\mathfrak{s}^2),
\end{equation}
where
\begin{equation}\label{eq:GAERW-fluctuation-variance}
    \mathfrak{s}^2=
    \frac{(\sigma^+)^2+(\sigma^-)^2}{2}
    +\rho^2\left(
       \frac{\beta^2}{\delta^2}
       +\frac{A^2}{\delta^2(2\delta-1)}
    \right).
\end{equation}
\end{theorem}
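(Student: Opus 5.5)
The plan is to use the decomposition $W_n-n\lambda=\rho S_n+V_n$ from the introduction and to treat the two pieces separately. The piece $\rho S_n$ carries the random superdiffusive limit, and I would show that the AERW part $S_n-\mathcal{L}_{q,\beta}n^\delta$ has Gaussian fluctuations on the scale $\sqrt n$. The piece $V_n$ is conditionally a sum of independent centred variables given $\mathcal G_\infty$, as in the proof of Proposition \ref{prop:finitedim<}.

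\textbf{The increment part $V_n$.} Write $V_n=\frac12\sum_{k\le n}\bigl(\tilde Z_k^+(X_k+1)-\tilde Z_k^-(X_k-1)\bigr)$. The law-of-the-iterated-logarithm argument from the proof of Theorem \ref{FCLT=} gives $V_n=\mathcal O(\sqrt{n\log\log n})$ a.s. Since $2\delta>1$ (this is exactly $p>p_c(\beta)$), we get $V_n/n^\delta\to0$ a.s. For \eqref{LLN-superdiffusive} it then remains to show $S_n/n^\delta\to\mathcal{L}_{q,\beta}$ a.s. This will come out of the martingale construction below, with $\mathcal{L}_{q,\beta}$ defined as a constant multiple of an a.s. limit. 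For the fluctuations, the Lindeberg computation in the proof of Proposition \ref{prop:finitedim<} gives
\[
\EE\bigl[e^{i\theta V_n/\sqrt n}\,\big|\,\mathcal G_\infty\bigr]\to e^{-\theta^2((\sigma^+)^2+(\sigma^-)^2)/4}\quad\text{a.s.}
\]
The limit is deterministic. Hence, once $(S_n-\mathcal{L}_{q,\beta}n^\delta)/\sqrt n$, which is $\mathcal G_\infty$-measurable, converges in law to $\mathcal N(0,v)$, dominated convergence in the characteristic function gives \eqref{Fluc-superdiffusive} with $\mathfrak s^2=((\sigma^+)^2+(\sigma^-)^2)/2+\rho^2 v$. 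So everything reduces to an AERW statement with target variance $v=\beta^2/\delta^2+A^2/(\delta^2(2\delta-1))$.

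\textbf{The AERW part: martingale construction.} Let $Y_n=\sum_{k\le n}\mu_kX_k$. The identity $\EE[X_{n+1}\mid\mathcal G_n]=A\,Y_n/(n\mu_{n+1})$ gives $\EE[Y_{n+1}\mid\mathcal G_n]=(1+A/n)Y_n$. With $a_n=\prod_{k<n}(1+A/k)^{-1}\sim c\,n^{-A}$, the process $M_n=a_nY_n$ is a martingale with increments $a_n\mu_n\varepsilon_n$, where $\varepsilon_n=X_n-\EE[X_n\mid\mathcal G_{n-1}]$. Because $X_n^2=1$ and $\overline X^\mu_n\to0$ a.s., we have $\EE[\varepsilon_n^2\mid\mathcal G_{n-1}]\to1$ a.s. Also $a_n\mu_n\asymp n^{-\delta}$ with $2\delta>1$, so $M_n$ is $L^2$-bounded and $M_n\to M_\infty$ a.s. Summation by parts, $S_n=Y_n/\mu_n+\sum_{k<n}Y_k(1/\mu_k-1/\mu_{k+1})$, together with $Y_k=M_k/a_k$ and $\mu_k\sim k^\beta/\Gamma(\beta+1)$, then yields $S_n/n^\delta\to\mathcal{L}_{q,\beta}:=\kappa M_\infty$ a.s. for an explicit constant $\kappa$ (consistent with \cite{LL}).

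\textbf{The AERW part: fluctuation CLT.} Substitute $M_k=M_\infty-\sum_{j>k}a_j\mu_j\varepsilon_j$ into the summation-by-parts formula. This writes $S_n-\mathcal{L}_{q,\beta}n^\delta$ as $\sum_{k\ge1}c_{n,k}\varepsilon_k$ plus deterministic and initial-term errors of order $o(\sqrt n)$. A Riemann-sum computation shows:
\begin{itemize}
\item for $k\le n$, the ``$1$'' from $X_k$ itself and the memory contribution $-A/\delta$ combine to $c_{n,k}\to 1-A/\delta=-\beta/\delta$, uniformly for $\epsilon n\le k\le n$;
\item for $k>n$, $c_{n,k}\approx-(A/\delta)(n/k)^\delta$.
\end{itemize}
It follows that $\frac1n\sum_k c_{n,k}^2\,\EE[\varepsilon_k^2\mid\mathcal G_{k-1}]\to \beta^2/\delta^2+(A/\delta)^2\int_1^\infty x^{-2\delta}dx=v$ a.s. Lindeberg is trivial since $|\varepsilon_k|\le2$.

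\textbf{Main obstacle.} The sum $\sum_k c_{n,k}\varepsilon_k$ runs over the infinite future $k>n$, so it is not a standard martingale array. I would first truncate at $k\le Kn$. The tail beyond $Kn$ has $L^2$ norm at most $C\sqrt n\,K^{1/2-\delta}$, which is negligible uniformly in $n$ as $K\to\infty$. The truncated sum is a finite martingale array in the filtration $(\mathcal G_k)$, and the martingale CLT gives convergence to $\mathcal N(0,v_K)$ with $v_K\to v$. Letting $K\to\infty$ via a standard approximation lemma (Billingsley, Thm 3.2) completes the argument. The delicate bookkeeping is making the summation-by-parts error terms, including those from $k\le\epsilon n$ and from $\mu_k$ versus $k^\beta$ corrections, rigorously $o(\sqrt n)$.
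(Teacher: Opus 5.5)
Your proposal is correct, but it is organized differently from the paper's proof.

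\textbf{How the paper proceeds.} The paper never conditions on $\mathcal G_\infty$ in this theorem. It uses the exact identity $S_n=N_n+r_nM_n$, where $N_n=S_n-\frac{A}{\delta}\mu_n^{-1}Y_n$ is the second martingale of \cite{LL}. It writes the centred quantity as $V_n+\rho N_n+\rho r_n(M_n-M_\infty)+\rho(r_n-Bn^\delta)M_\infty$. It then puts the increments of $V+\rho N$ for $k\le n$ and the terms $-\rho r_n\Delta M_k$ for $n<k\le n^2$ into a single $(\mathcal F_k)$-martingale array. This forces it to check $\EE[U_k\varepsilon_k\mid\mathcal F_{k-1}]=0$ to exclude cross terms. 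It disposes of $k>n^2$ directly by Karamata's theorem, so no $K\to\infty$ approximation step is needed.

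\textbf{What your route buys.} Factoring the characteristic function through $\mathcal G_\infty$ makes the additivity of the two variance contributions automatic, because the conditional limit of $V_n/\sqrt n$ is deterministic. It also isolates a pure AERW fluctuation theorem, that is, the case $Z^\pm\equiv\pm1$. The cost is the two-stage truncation at $Kn$ followed by Billingsley's approximation theorem.

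\textbf{The bookkeeping obstacle disappears.} Your summation by parts is the paper's $N_n$ in disguise. With $u_k=(a_k\mu_k)^{-1}$ one has $u_1=1$ and $u_{k+1}-u_k=\delta u_k/(k+\beta)$. Hence $\sum_{k<j}\beta u_k/(k+\beta)=\frac{\beta}{\delta}(u_j-1)$, and your formula collapses to
\[
S_n=r_nM_n-\frac{\beta}{\delta}\Bigl(X_1+\sum_{k=2}^{n}\varepsilon_k\Bigr),\qquad r_n=\frac{A}{\delta}u_n ,
\]
in your normalization $\varepsilon_k=X_k-\EE[X_k\mid\mathcal G_{k-1}]$. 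So $c_{n,k}=-\beta/\delta$ exactly for $2\le k\le n$ and $c_{n,k}=-r_na_k\mu_k$ for $k>n$. No $\epsilon n$ cut-off and no $\mu_k$ versus $k^\beta$ corrections are needed.

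\textbf{One small omission.} You do not show that $\mathcal L_{q,\beta}$ is non-degenerate; the paper simply cites \cite{LL}. It follows at once from the orthogonality of the increments of $M$, since $M_1=X_1$ gives $\operatorname{Var}(M_\infty)\ge\operatorname{Var}(X_1)=4q(1-q)>0$.
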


\medskip

\begin{proof} Let us prove \eqref{LLN-superdiffusive} first. Using \eqref{incr-form}, setting $\tilde{Z}_k^{\pm} = Z_k^{\pm} - \rho^{\pm}$, $S_n = \sum_{k=1}^n X_k$ and
$$
V_n = \sum_{k=1} ^n \left[\frac{\left(1+X_k\right)}{2} \tilde{Z}_k^+ +  \frac{\left(1-X_k\right)}{2} \tilde{Z}_k^- \right],
$$
write
\begin{equation}
    W_n = n \lambda +\rho S_n + V_n.
\end{equation}
Recall that $\mathcal{F}_k = \sigma(X_j, Z_j^+,Z_j^-: 1 \leq j \leq k)$ and observe that $(V_n)_{n\ge 0}$ is a square-integrable $(\mathcal{F}_n)$-martingale. Thus here we are using a martingale that is distinct from that used in the proof of Lemma \ref{LLN}. Write 
\begin{equation} \label{forget}
    \frac{W_n-n\lambda}{n^{\delta}} = \rho \frac{S_n}{n^{\delta}} + \frac{V_ n}{n^{\delta}}.
\end{equation}
Set $Y_n = \sum_{k=1}^n X_k \mu_k$ and $M_n = a_n Y_n$ where
\begin{equation}
a_n=\prod_{k=1}^{n-1}\gamma_k^{-1}
=\frac{\Gamma(n)\Gamma\!\bigl(A+1\bigr)}
{\Gamma\!\bigl(n+A\bigr)},
\ \, \text{with} \ \,
\gamma_n=1+\frac{A}{n}
\end{equation}
as in \cite{LL}. Recall that $\mathcal{G}_k = \sigma(X_j: 1 \leq j \leq k)$. The process $(M_n)_{n\ge 0}$ is a square-integrable $(\mathcal{G}_n)$-martingale. Based on \cite[(1.12)]{LL}, define also
\begin{equation}\label{eq:N-definition}
    N_n
    =S_n+\frac{A}{\beta-A}\mu_n^{-1}Y_n
    =S_n-\frac{A}{\delta}\mu_n^{-1}Y_n
\end{equation}
and
\[
    r_n=\frac{A}{\delta}(\mu_na_n)^{-1}.
\]
The process $(N_n)_{n\ge0}$ is also a square-integrable $(\mathcal G_n)$-martingale. Since the sequences $(Z_k^+)_{k\ge1}$ and $(Z_k^-)_{k\ge1}$ are independent of $\mathcal G_\infty$, both $M$ and $N$ remain square-integrable martingales with respect to the enlarged filtration $\mathcal F_n$. It follows from \eqref{eq:N-definition} that
\begin{equation}\label{eq:R-N-M-decomposition}
    S_n=N_n+r_nM_n.
\end{equation}

Set
\[
    C=\frac{\Gamma(A+1)}{\Gamma(\beta+1)}.
\]
We have
\begin{equation}\label{eq:an-mun-asymptotics}
    a_n\mu_n
    =
    C\frac{\Gamma(n+\beta)}{\Gamma(n+A)}
    =
    Cn^{-\delta}\bigl(1+O(n^{-1})\bigr).
\end{equation}
Consequently,
\begin{equation}\label{eq:rn-asymptotics}
    r_n
    =
    Bn^\delta\bigl(1+O(n^{-1})\bigr),
    \qquad
    B=
    \frac{A\Gamma(\beta+1)}
         {\delta\Gamma(A+1)}.
\end{equation}

Going back to \eqref{forget}, we have
\begin{equation} \label{forget2}
    \frac{W_n-n\lambda}{n^{\delta}} = \rho \frac{r_n M_n}{n^{\delta}} + \frac{V_n + \rho N_n}{n^{\delta}}.
\end{equation}
In the superdiffusive regime, the proof of Theorem 2.7 in
\cite{LL} shows that \(M_n\) converges almost surely and in
\(L^2\) to a non-degenerate random variable \(M_\infty\). Again as in \cite{LL}, define
\begin{equation}\label{eq:L-definition}
    \mathcal{L}_{q,\beta}=BM_\infty
\end{equation}
and use \eqref{eq:rn-asymptotics} to obtain that 
$\displaystyle \frac{r_n M_n}{n^{\delta}}$ converges almost surely to  $\mathcal{L}_{q,\beta}$. On the other hand, a straightforward calculation shows that the predictable quadratic variation of $V_n$ is
\begin{align*}
\langle V \rangle_n & = \sum_{k=1}^n \mathbb{E}\left[(\Delta V_k)^2 | 
    \mathcal{F}_{k-1}\right] =\sum_{k=1}^n \left( \mathbb{E}\left[\frac{1+X_k}{2}\Big|\mathcal{F}_{k-1}\right] (\sigma^+)^2 + \mathbb{E}\left[\frac{1-X_k}{2}\Big|\mathcal{F}_{k-1}\right] (\sigma^-)^2 \right) \\
& = q(\sigma^+)^2 + (1-q)(\sigma^-)^2 + \frac{(n-1) ((\sigma^+)^2 + (\sigma^-)^2)}{2} \\
& \qquad \qquad \qquad \qquad \qquad \qquad \qquad + \frac{((\sigma^+)^2 - (\sigma^-)^2)}{2} \vartheta (\beta+1) \sum_{k=2}^n \overline X_{k-1}^\mu
\end{align*}
which is asymptotically linear, since by \cite[(4.5)]{LL} $\overline X_{k-1}^\mu \rightarrow 0$ a.s. Moreover, \cite[(3.5)]{LL} gives 
\[
    \lim_{n\to\infty}\frac{\langle N\rangle_n}{n}
    =\frac{\beta^2}{\delta^2}
\]
(note that $N_n\equiv0$ if $\beta=0$). As verified below in
\eqref{eq:UN-cross-moment}, we also have $\langle V,N\rangle_n=0$. Therefore,
$(V_n+\rho N_n)_{n\geq0}$ is a square-integrable
$(\mathcal F_n)$-martingale such that
\begin{equation}
\label{lemma:QV-N}
\lim_{n\to\infty}\frac{\langle V+\rho N\rangle_n}{n}
=
\lim_{n\to\infty}
\frac{\langle V\rangle_n+\rho^2\langle N\rangle_n}{n}
=
\frac{(\sigma^+)^2+(\sigma^-)^2}{2}
+\left(\frac{\rho\beta}{\delta}\right)^2.
\end{equation}
We indeed show a stronger result in the proof of \eqref{Fluc-superdiffusive} below. 
 
Since
$p > \frac{4\beta+3}{4\beta+4}$, we have that
$\delta=\vartheta(\beta+1)-\beta>1/2$. The increments of $(V_n+\rho N_n)_{n\geq 0}$ are
square-integrable and have uniformly bounded second moments. Consequently, 
\[
\sum_{k=1}^{\infty}
\frac{
    \big(\Delta V_k+\rho\Delta N_k\big)^2
}{k^{2\delta}}
<\infty
\qquad\mbox{a.s.}
\]
Furthermore, by the orthogonality of martingale increments,
\[
\sup_{n\geq 1}
\EE\left[
    \left(
        \sum_{k=1}^{n}
        \frac{\Delta V_k+\rho\Delta N_k}{k^\delta}
    \right)^2
\right]
=
\sum_{k=1}^{\infty}
\frac{
    \EE\left[
        \big(\Delta V_k+\rho\Delta N_k\big)^2
    \right]
}{k^{2\delta}}
<\infty.
\]
Thus,
\[
\left(
    \sum_{k=1}^{n}
    \frac{\Delta V_k+\rho\Delta N_k}{k^\delta}
\right)_{n\geq 1}
\]
is a square-integrable martingale bounded in $L^2$, and hence it
converges almost surely. Therefore, by Kronecker's lemma
(see \cite[p.~31]{HH80}),
\[
\frac{V_n+\rho N_n}{n^\delta}
\xrightarrow[n \to \infty]{} 0
\qquad\mbox{a.s.}
\]
Therefore, \eqref{LLN-superdiffusive} holds.

\medskip

We now prove \eqref{Fluc-superdiffusive}. For \(k\geq1\), write the
martingale increments of \(V\) as
\begin{equation}\label{eq:U-definition}
    U_k=\Delta V_k
    =
    \frac{1+X_k}{2}\widetilde Z_k^+
    +\frac{1-X_k}{2}\widetilde Z_k^-.
\end{equation}
Following \cite{LL}, for \(k\geq2\), let
\[
    \varepsilon_k
    =
    Y_k-\left(1+\frac{A}{k-1}\right)Y_{k-1}.
\]
This is exactly the martingale innovation used in \cite{LL}, with the
identification \(A=\vartheta(\beta+1)\) and with \(k=n+1\) in the notation
\(\varepsilon_{n+1}=Y_{n+1}-\gamma_nY_n\) of that paper.
If
\[
    h_{k-1}=\mathbb E[X_k\mid\mathcal G_{k-1}]=\mathbb E[X_k\mid\mathcal F_{k-1}],
\]
then
\begin{equation}\label{eq:epsilon-expression}
    \varepsilon_k=\mu_k(X_k-h_{k-1}).
\end{equation}
Therefore,
\begin{equation}\label{eq:epsilon-properties}
    \mathbb E[\varepsilon_k^2\mid\mathcal G_{k-1}]
    =\mu_k^2(1-h_{k-1}^2),
    \qquad
    |\varepsilon_k|\leq2\mu_k.
\end{equation}
Moreover,
\begin{equation}\label{eq:martingale-increments}
    \Delta M_k=a_k\varepsilon_k,
    \qquad
    \Delta N_k
    =-\frac{\beta}{\delta}\frac{\varepsilon_k}{\mu_k}.
\end{equation}

By equation~(4.5) of \cite{LL},
\[
    \frac{Y_n}{n\mu_n}\xrightarrow[n \to \infty]{} 0
    \qquad\text{a.s.}
\]
Since
\[
    h_{k-1}
    =\frac{A}{(k-1)\mu_k}Y_{k-1}
\]
and \(\mu_k/\mu_{k-1}\to1\), it follows that
\begin{equation}\label{eq:h-to-zero}
    h_k\xrightarrow[k \to \infty]{} 0
    \qquad\text{a.s.}
\end{equation}

Using
\[
    M_\infty-M_n
    =\sum_{k=n+1}^{\infty}a_k\varepsilon_k,
\]
we can write
\begin{align}
    &\frac{W_n-n\lambda-\rho L_{q,\beta}n^\delta}{\sqrt n}
    \notag\\
    &\quad=
    \frac{V_n+\rho N_n+\rho r_n(M_n-M_\infty)}{\sqrt n}
    +
    \rho\frac{(r_n-Bn^\delta)M_\infty}{\sqrt n}.
    \label{eq:central-decomposition}
\end{align}
By \eqref{eq:rn-asymptotics},
\[
    r_n-Bn^\delta=O(n^{\delta-1}).
\]
Since \(\delta<1\), the last term in
\eqref{eq:central-decomposition} converges almost surely to zero.

We apply the martingale triangular-array central limit theorem to the first
term on the right-hand side of \eqref{eq:central-decomposition}, see \cite[Corollary 3.1]{HH80}. To obtain a
finite array, set \(m_n=n^2\) and define, for \(2\leq k\leq m_n\),
\[
    D_{n,k}
    =
    \begin{cases}
    \displaystyle
    \frac{1}{\sqrt n}
    \left(
       U_k-\rho\frac{\beta}{\delta}
       \frac{\varepsilon_k}{\mu_k}
    \right),
       &2\leq k\leq n,\\[3ex]
    \displaystyle
    -\frac{\rho r_n}{\sqrt n}a_k\varepsilon_k,
       &n<k\leq m_n.
    \end{cases}
\]
By independence of the sequences \((Z_j^+)\) and \((Z_j^-)\) from the
AERW, all the conditional identities for \(\varepsilon_k\) remain valid with
\(\mathcal F_{k-1}\) in place of \(\mathcal G_{k-1}\). Hence
\((D_{n,k})\) is a martingale-difference array with respect to
\((\mathcal F_k)\). The finitely many initial terms omitted from the array are
\(o_{\mathbb P}(1)\) after division by \(\sqrt n\).

We next explain why no cross-covariance terms are missing from the limiting
variance. For \(k\leq n\),
\[
    \Delta V_k=U_k,
    \qquad
    \Delta N_k
    =-\frac{\beta}{\delta}\frac{\varepsilon_k}{\mu_k}.
\]
Conditionally on \(\mathcal F_{k-1}\) and \(X_k\), the variable \(U_k\) is
either \(\widetilde Z_k^+\) or \(\widetilde Z_k^-\). Both variables are
centered and independent of the AERW. Consequently,
\begin{equation}\label{eq:UN-cross-moment}
    \mathbb E[U_k\varepsilon_k\mid\mathcal F_{k-1}]=0,
\end{equation}
and therefore
\[
    \langle V,N\rangle_n
    =
    \sum_{k=2}^n
    \mathbb E[\Delta V_k\Delta N_k\mid\mathcal F_{k-1}]
    =0.
\]

The cross terms involving \(M_n-M_\infty\) vanish for a different reason.
Since \(M_n\to M_\infty\) in \(L^2\), and \(M\) remains a martingale with
respect to the enlarged filtration by independence of the random step sizes,
\[
    M_n=\mathbb E[M_\infty\mid\mathcal F_n].
\]
Since \(V_n\) and \(N_n\) are \(\mathcal F_n\)-measurable, it follows that
\begin{equation}\label{eq:past-future-cross-moments}
    \mathbb E[V_n(M_n-M_\infty)]
    =
    \mathbb E[N_n(M_n-M_\infty)]
    =0.
\end{equation}
These identities express the separation between the increments up to time
\(n\), which form \(V_n\) and \(N_n\), and the future increments
\[
    M_n-M_\infty
    =-\sum_{k=n+1}^{\infty}\Delta M_k.
\]
Notice that \(\langle N,M\rangle_n\) is not zero in general. Indeed,
\[
    \langle N,M\rangle_n
    =
    -\frac{\beta}{\delta}
    \sum_{k=2}^n a_k\mu_k(1-h_{k-1}^2).
\]
It does not contribute here because the \(N\)-component of the triangular
array uses increments with \(k\leq n\), whereas its \(M\)-component uses
increments with \(k>n\). Thus, within a fixed row of the array, the only two
components occurring at the same time are \(U_k\) and \(\Delta N_k\), and
their conditional cross moment is zero by
\eqref{eq:UN-cross-moment}.

We now record explicitly the regular-variation result that will be used twice
below. Define the positive function
\begin{equation}\label{eq:q-function-definition}
    g(x)
    =
    \left(
       C\frac{\Gamma(x+\beta)}{\Gamma(x+A)}
    \right)^2,
    \qquad
    C=\frac{\Gamma(A+1)}{\Gamma(\beta+1)},
    \qquad x\geq1.
\end{equation}
In particular,
\[
    g(k)=(a_k\mu_k)^2.
\]
The standard asymptotic formula for a quotient of Gamma functions gives
\begin{equation}\label{eq:q-regular-variation}
    g(x)\sim C^2x^{-2\delta}.
\end{equation}
Thus \(g\) is regularly varying at infinity with index \(-2\delta\); explicitly,
\begin{equation}\label{Potter}
    \frac{g(tx)}{g(x)}\xrightarrow[x \to \infty]{} t^{-2\delta}
    \qquad\text{for every }t>0.
\end{equation}
Because \(\delta>1/2\), this index is strictly smaller than \(-1\). The
discrete tail form of Karamata's theorem states that, if \(f\) is positive and
regularly varying with index \(\alpha<-1\), then
\[
    \frac{\sum_{k=n+1}^{\infty}f(k)}{nf(n)}
    \xrightarrow[n \to \infty]{} \frac{1}{-\alpha-1}.
\]
See \cite[Theorem~1.5.11 and Section~1.9]{BGT}. The discrete statement follows
by applying the integral form of Karamata's theorem to a step-function
interpolation of the sequence. Applying this result to the function \(g\) in
\eqref{eq:q-function-definition}, whose index is \(\alpha=-2\delta\), yields
\begin{equation}\label{eq:Karamata-q-tail}
    \frac{\sum_{k=n+1}^{\infty}(a_k\mu_k)^2}
         {n(a_n\mu_n)^2}
    =
    \frac{\sum_{k=n+1}^{\infty}g(k)}{ng(n)}
    \xrightarrow[n \to \infty]{} \frac{1}{2\delta-1}.
\end{equation}

We first use \eqref{eq:Karamata-q-tail} to show that the part of the martingale
tail after \(m_n\) is negligible. Since
\[
    r_n=\frac{A}{\delta}(a_n\mu_n)^{-1}
        =\frac{A}{\delta}g(n)^{-1/2},
\]
Karamata's limit \eqref{eq:Karamata-q-tail}, now applied with \(m_n\) in place
of \(n\), gives
\begin{align*}
    \frac{r_n^2}{n}\sum_{k>m_n}(a_k\mu_k)^2
    &=
    \frac{A^2}{\delta^2}
    \frac{\sum_{k>m_n}g(k)}{ng(n)}\\
    &\sim
    \frac{A^2}{\delta^2(2\delta-1)}
    \frac{m_ng(m_n)}{ng(n)}.
\end{align*}
By the explicit asymptotic relation \eqref{eq:q-regular-variation},
\[
    \frac{m_ng(m_n)}{ng(n)}
    \sim
    \left(\frac{m_n}{n}\right)^{1-2\delta}
    =n^{1-2\delta} \xrightarrow[n \to \infty]{} 0.
\]
Consequently,
\begin{equation}\label{eq:truncated-tail-negligible}
    \frac{r_n^2}{n}\sum_{k>m_n}(a_k\mu_k)^2
    \xrightarrow[n \to \infty]{} 0.
\end{equation}
We now justify precisely the orthogonality argument used to control the
truncated martingale tail. Since
\[
    \mathbb E[\varepsilon_k\mid\mathcal G_{k-1}]=0,
\]
the variables \(a_k\varepsilon_k=\Delta M_k\) are square-integrable
martingale differences. Therefore, if \(m<\ell\),
$$
    \mathbb E\Big[
       \Big(\sum_{k=m+1}^{\ell}a_k\varepsilon_k\Big)^2
    \Big]
    =
    \sum_{k=m+1}^{\ell}a_k^2\mathbb E[\varepsilon_k^2].
$$
Since \(M_n\to M_\infty\) in \(L^2\), letting \(\ell\to\infty\) gives
$$
    \mathbb E\Big[ \Big(\sum_{k=m+1}^{\infty}a_k\varepsilon_k\Big)^2
    \Big]
    =
    \sum_{k=m+1}^{\infty}a_k^2\mathbb E[\varepsilon_k^2].
$$
Moreover, by \eqref{eq:epsilon-properties},
\[
    \mathbb E[\varepsilon_k^2]
    =\mathbb E[\mu_k^2(1-h_{k-1}^2)]
    \leq\mu_k^2.
\]
Consequently,
\begin{align*}
    \mathbb E\Big[
       \Big|
       \frac{\rho r_n}{\sqrt n}
       \sum_{k>m_n}a_k\varepsilon_k
       \Big|^2
    \Big]
    &=
    \frac{\rho^2r_n^2}{n}
    \sum_{k>m_n}a_k^2\mathbb E[\varepsilon_k^2]\\
    &\leq
    \frac{\rho^2r_n^2}{n}
    \sum_{k>m_n}(a_k\mu_k)^2
    \longrightarrow0
\end{align*}
by \eqref{eq:truncated-tail-negligible}. Hence
\[
    \frac{\rho r_n}{\sqrt n}
    \sum_{k>m_n}a_k\varepsilon_k
    \xrightarrow[n \to \infty]{} 0
    \qquad\text{in }L^2.
\]

We next compute the limiting conditional variance. The limits established
immediately before this proof give
\[
    \frac{\langle V\rangle_n}{n}
    \xrightarrow[n \to \infty]{} \frac{(\sigma^+)^2+(\sigma^-)^2}{2}
    \qquad\text{and}\qquad
    \frac{\langle N\rangle_n}{n}
    \xrightarrow[n \to \infty]{} \frac{\beta^2}{\delta^2}
    \qquad\text{a.s.}
\]
Together with \(\langle V,N\rangle_n=0\), proved above, these limits imply
\begin{align}
    \frac{1}{n}\langle V+\rho N\rangle_n
    &=
    \frac{\langle V\rangle_n}{n}
    +\frac{2\rho\langle V,N\rangle_n}{n}
    +\rho^2\frac{\langle N\rangle_n}{n}
    \notag\\
    &\xrightarrow[n \to \infty]{}
    \frac{(\sigma^+)^2+(\sigma^-)^2}{2}
    +\rho^2\frac{\beta^2}{\delta^2}
    \qquad\text{a.s.}
    \label{eq:past-variance}
\end{align}

For the contribution from \(k>n\), observe from
\eqref{eq:epsilon-properties} that
\[
    a_k^2\mathbb E[\varepsilon_k^2\mid\mathcal G_{k-1}]
    =g(k)(1-h_{k-1}^2).
\]
Furthermore, \eqref{eq:h-to-zero} implies
\[
    \sup_{k>n}|h_{k-1}| \xrightarrow[n \to \infty]{} 0
    \qquad\text{a.s.}
\]
Hence the factors \(1-h_{k-1}^2\) do not change the asymptotic value of the
tail sum. Using again \eqref{eq:Karamata-q-tail}, namely Karamata's theorem for
the function \(g\) of index \(-2\delta\), we obtain
\begin{align}
    &\frac{\rho^2r_n^2}{n}
    \sum_{k=n+1}^{\infty}
    a_k^2\mathbb E[\varepsilon_k^2\mid\mathcal F_{k-1}]
    \notag\\
    &\quad=
    \rho^2\frac{A^2}{\delta^2}
    \frac{\sum_{k=n+1}^{\infty}g(k)(1-h_{k-1}^2)}{ng(n)}
    \notag\\
    &\quad \xrightarrow[n \to \infty]{}
    \rho^2\frac{A^2}{\delta^2(2\delta-1)}
    \qquad\text{a.s.}
    \label{eq:future-variance}
\end{align}
The same limit holds with the sum truncated at \(m_n\), by
\eqref{eq:truncated-tail-negligible}. Combining
\eqref{eq:past-variance} and \eqref{eq:future-variance}, the conditional
variance of the martingale array converges almost surely to
\[
    \frac{(\sigma^+)^2+(\sigma^-)^2}{2}
    +\rho^2\frac{\beta^2}{\delta^2}
    +\rho^2\frac{A^2}{\delta^2(2\delta-1)}
    = \mathfrak{s}^2.
\]

It remains to verify the conditional Lindeberg condition. For $k\leq n$, set
\[
c_k=\frac{\rho\beta\varepsilon_k}{\delta\mu_k}.
\]
By \eqref{eq:epsilon-properties},
\[
|c_k|\leq C:=\frac{2|\rho|\beta}{\delta}.
\]
Hence, for every $\eta>0$ and all sufficiently large $n$,
\[
|U_k-c_k|>\eta\sqrt{n}
\quad\Longrightarrow\quad
|U_k|>\frac{\eta\sqrt{n}}{2}.
\]
Moreover, on this event we have $|c_k|\leq |U_k|$, and therefore
\[
(U_k-c_k)^2\leq 4U_k^2.
\]
The contribution of $U_k$ satisfies Lindeberg's condition because
$Z_1^+$ and $Z_1^-$ are square-integrable. Indeed,
\[
\frac{1}{n}\sum_{k=2}^{n}
\mathbb{E}\left[
U_k^2\mathbf{1}_{\{|U_k|>\eta\sqrt{n}/2\}}
\right]
\leq
\mathbb{E}\left[
(\widetilde Z_1^+)^2
\mathbf{1}_{\{|\widetilde Z_1^+|>\eta\sqrt{n}/2\}}
\right]
+
\mathbb{E}\left[
(\widetilde Z_1^-)^2
\mathbf{1}_{\{|\widetilde Z_1^-|>\eta\sqrt{n}/2\}}
\right]
\longrightarrow 0.
\]
If $L_n$ denotes the corresponding conditional Lindeberg sum, then
\[
L_n
:=
\frac{1}{n}\sum_{k=2}^{n}
\mathbb{E}\left[
(U_k-c_k)^2
\mathbf{1}_{\{|U_k-c_k|>\eta\sqrt{n}\}}
\,\middle|\,\mathcal{F}_{k-1}
\right]
\leq
\frac{4}{n}\sum_{k=2}^{n}
\mathbb{E}\left[
U_k^2
\mathbf{1}_{\{|U_k|>\eta\sqrt{n}/2\}}
\,\middle|\,\mathcal{F}_{k-1}
\right].
\]
The preceding estimate and the tower property yield
$\mathbb{E}[L_n]\to0$. Since $L_n\geq0$, Markov's inequality implies
that $L_n\to0$ in probability.

For \(k>n\), the sequence \((a_k\mu_k)\) is decreasing, since
\[
    \frac{a_{k+1}\mu_{k+1}}{a_k\mu_k}
    =\frac{k+\beta}{k+A}<1.
\]
Therefore,
\[
    \sup_{k>n}
    \frac{r_na_k|\varepsilon_k|}{\sqrt n}
    \leq
    \frac{2A}{\delta\sqrt n}
    \sup_{k>n}
    \frac{a_k\mu_k}{a_n\mu_n}
    \leq
    \frac{2A}{\delta\sqrt n}
    \xrightarrow[n \to \infty]{} 0.
\]
Thus, the conditional Lindeberg condition holds for the entire array.

The martingale triangular-array central limit theorem now yields
\[
    \frac{
      V_n+\rho N_n+\rho r_n(M_n-M_\infty)
    }{\sqrt n}
    \xrightarrow[n \to \infty]{\mathrm d}\mathcal N(0,s^2).
\]
Together with \eqref{eq:central-decomposition}, this proves
\eqref{Fluc-superdiffusive}.
\end{proof}

\section{Limit theorems for the GERW with infinite-variance increments}\label{sec:stable}

We now assume that the distributions of the sequences $(Z_n^+)_{n\geq 1}$
and $(Z_n^-)_{n\geq 1}$ are in the domains of attraction of stable laws with
parameters $\alpha^+,\alpha^-\in(0,2)$, respectively. More precisely, there
exist non-degenerate stable random variables $S_{\alpha^+}^+$ and
$S_{\alpha^-}^-$, normalizing sequences $a_n^+,a_n^->0$, and centering
constants $b_n^+,b_n^-\in\mathbb{R}$ such that
\[
 \frac{1}{a_n^+}\left(\sum_{k=1}^n Z_k^+-b_n^+\right)
 \xrightarrow{d}S_{\alpha^+}^+
 \quad\text{and}\quad
 \frac{1}{a_n^-}\left(\sum_{k=1}^n Z_k^--b_n^-\right)
 \xrightarrow{d}S_{\alpha^-}^-
 \quad\text{as }n\to\infty.
\]
The normalizing sequences may be chosen so that
$a_n^\pm\sim n^{1/\alpha^\pm}L^\pm(n)$, where $L^+$ and $L^-$ are slowly
varying. With $F_\pm$ denoting the distributions of $Z_1^\pm$, the standard
centering convention is
\[
b_n^\pm=
\begin{cases}
 n\rho^\pm, & \alpha^\pm>1,\\[0.3em]
 n\displaystyle\int_{|x|\leq a_n^\pm}x\,dF_\pm(x)
 =n\EE\!\left[Z_1^\pm\mathds{1}_{\{|Z_1^\pm|\leq a_n^\pm\}}\right],
 & \alpha^\pm=1,\\[0.8em]
 0, & \alpha^\pm<1,
\end{cases}
\]
where $\rho^\pm=\EE[Z_1^\pm]$ when $\alpha^\pm>1$, see
\cite[Chapter XVII]{Feller}.

Recall that an $\alpha$-stable random variable
$S_\alpha(\gamma,\zeta,\delta)$, with scale parameter $\gamma>0$, skewness
$\zeta\in[-1,1]$, and location parameter $\delta\in\mathbb{R}$, has
characteristic function
\[
\varphi(u)=
\begin{cases}
\exp\!\left\{i\delta u-\gamma^\alpha|u|^\alpha
\left(1-i\zeta\,\operatorname{sign}(u)
\tan\dfrac{\pi\alpha}{2}\right)\right\}, & \alpha\neq1,\\[1.1em]
\exp\!\left\{i\delta u-\gamma|u|
\left(1+i\zeta\,\dfrac{2}{\pi}\operatorname{sign}(u)
\log|u|\right)\right\}, & \alpha=1.
\end{cases}
\]
Its L\'evy--Khintchine representation, with truncation
$x\mathds{1}_{\{|x|\leq1\}}$, is
\[
 \varphi(u)=\exp\!\left\{i\eta u+
 \int_{\mathbb{R}}\left(e^{iux}-1-iux\mathds{1}_{\{|x|\leq1\}}\right)
 \nu(dx)\right\}.
\]
Here $\eta$ is the drift associated with this truncation and should not be
confused with the location parameter $\delta$. The L\'evy measure has density
\[
 \nu(dx)=\left(c_+x^{-\alpha-1}\mathds{1}_{(0,\infty)}(x)
 +c_-|x|^{-\alpha-1}\mathds{1}_{(-\infty,0)}(x)\right)dx,
\]
where
\[
 c_+=C_\alpha\gamma^\alpha\frac{1+\zeta}{2},\qquad
 c_-=C_\alpha\gamma^\alpha\frac{1-\zeta}{2},
\]
and
\[
 C_\alpha=\frac{\alpha}
 {\Gamma(1-\alpha)\cos(\pi\alpha/2)},\quad \alpha\neq1,
 \qquad C_1=\frac{2}{\pi}.
\]

We shall consider the case
\[
 \alpha:=\alpha^+=\alpha^-\in(0,2)
\]
and assume that the two domains of attraction admit a common compatible
normalization
\[
 a_n:=a_n^+=a_n^-\sim n^{1/\alpha}L(n).
\]
Throughout this section, $p\in(0,1)$. As in the preceding sections, the
sequences $(Z_k^+)_{k\geq1}$ and $(Z_k^-)_{k\geq1}$ are mutually independent
and independent of the ERW.
Multiplicative constants in the two original normalizations can be absorbed
into the scale parameters of the limiting stable laws. Write
\[
 S_\alpha^+=S_\alpha(\gamma_+,\zeta_+,\delta_+),\qquad
 S_\alpha^-=S_\alpha(\gamma_-,\zeta_-,\delta_-),
\]
where the location parameters correspond to the centering convention above.
In particular, $\delta_+=\delta_-=0$ when $1<\alpha<2$.

For $t\geq0$, define the partial-sum processes
\[
 A_n^\pm(t):=\frac{1}{a_n}\left(
 \sum_{k=1}^{\lfloor nt\rfloor}Z_k^\pm
 -\frac{\lfloor nt\rfloor}{n}b_n^\pm\right).
\]
The classical stable functional limit theorem gives
\begin{equation}\label{stable-fclt-components}
 \big(A_n^+,A_n^-\big)
 \Rightarrow\big(\mathcal{S}_\alpha^+,\mathcal{S}_\alpha^-\big)
\end{equation}
in $D([0,\infty))^2$ endowed with the product $J_1$ topology, where the
limiting processes are independent stable L\'evy processes such that
$\mathcal{S}_\alpha^\pm(1)$ has the same distribution as $S_\alpha^\pm$,
see \cite[Theorem 2.7]{Sk}. Independence follows from the standing assumption
that the sequences $(Z_k^+)_{k\geq1}$ and $(Z_k^-)_{k\geq1}$ are independent.

Let $\Psi_+$ and $\Psi_-$ denote the characteristic exponents of
$S_\alpha^+$ and $S_\alpha^-$, respectively, and define
\[
 \Psi(u):=\frac{\Psi_+(u)+\Psi_-(u)}{2}.
\]
The corresponding stable law will be denoted by
$S_\alpha(\gamma,\zeta,\delta)$, where
\[
 \gamma=\left(\frac{\gamma_+^\alpha+\gamma_-^\alpha}{2}
 \right)^{1/\alpha},\qquad
 \zeta=\frac{\gamma_+^\alpha\zeta_++\gamma_-^\alpha\zeta_-}
 {\gamma_+^\alpha+\gamma_-^\alpha},\qquad
 \delta=\frac{\delta_++\delta_-}{2}.
\]
Equivalently, this is the distribution of
$\mathcal{S}_\alpha^+(1/2)+\mathcal{S}_\alpha^-(1/2)$, and its L\'evy measure
is $(\nu^++\nu^-)/2$.

Write again
\begin{equation}\label{stable-decomposition}
 \sum_{k=1}^m\mathcal{Z}_k
 =\sum_{k=1}^m\frac{1+X_k}{2}Z_k^+
 +\sum_{k=1}^m\frac{1-X_k}{2}Z_k^-.
\end{equation}
Set
\[
 S_m:=\sum_{k=1}^mX_k,\qquad
 N_m^+:=\sum_{k=1}^m\frac{1+X_k}{2}=\frac{m+S_m}{2},\qquad
 N_m^-:=\frac{m-S_m}{2}.
\]
The strong laws for the ERW give $S_m/m\to0$ almost surely, see
\cite[Theorems 3.1, 3.4 and 3.7]{B18}. Consequently, for every $T>0$,
\begin{equation}\label{clock-convergence}
 \sup_{0\leq t\leq T}\left|
 \frac{N_{\lfloor nt\rfloor}^\pm}{n}-\frac{t}{2}\right|
 \longrightarrow0\qquad\text{a.s.}
\end{equation}
Indeed, the left-hand side is bounded by
\[
 \frac{1}{n}+\frac{1}{2n}\max_{m\leq\lfloor nT\rfloor}|S_m|,
\]
which converges to zero almost surely. Recall that $\mathcal{G}_\infty=\sigma(X_1,X_2,\ldots)$ and define
\[
 b_{X,n}(t):=\frac{N_{\lfloor nt\rfloor}^+}{n}b_n^+
 +\frac{N_{\lfloor nt\rfloor}^-}{n}b_n^-.
\]
Conditionally on $\mathcal{G}_\infty$, the variables selected at the times at
which $X_k=1$ are still i.i.d. with the distribution of $Z_1^+$, and the
analogous statement holds for the minus-labeled increments. Thus, by
\eqref{stable-fclt-components}, \eqref{clock-convergence}, and the random
time-change theorem in the $J_1$ topology,
\begin{equation}\label{conditional-functional-limit}
 \left(\frac{\sum_{k=1}^{\lfloor nt\rfloor}\mathcal{Z}_k-b_{X,n}(t)}
 {a_n}\right)_{t\geq0}
 \Rightarrow
 \big(\mathcal{S}_\alpha^+(t/2)+\mathcal{S}_\alpha^-(t/2)\big)_{t\geq0}
\end{equation}
conditionally on $\mathcal{G}_\infty$, for almost every realization of the
ERW. We use here the continuity of composition at a continuous, strictly
increasing limiting clock. Addition is also continuous at the limiting pair
because two independent L\'evy processes have no common discontinuities almost
surely, see \cite[Theorem 12.7.3 and Section 13.2]{W02}.

In particular, with
\[
 b_{X,n}:=b_{X,n}(1),\qquad b_n:=\frac{b_n^++b_n^-}{2},\qquad
 \rho_n:=\frac{b_n^+-b_n^-}{2n},
\]
we have
\begin{equation}\label{conv-cond-cent}
 \frac{\sum_{k=1}^n\mathcal{Z}_k-b_{X,n}}{a_n}
 \Longrightarrow S_\alpha(\gamma,\zeta,\delta)
\end{equation}
conditionally on $\mathcal{G}_\infty$, almost surely. Moreover,
\begin{equation}\label{cond-uncond-cent}
 \frac{b_{X,n}-b_n}{a_n}=\frac{\rho_n}{a_n}S_n.
\end{equation}
If $1<\alpha<2$, then $b_n=n\lambda$ and $\rho_n=\rho$, where
\[
 \lambda=\frac{\rho^++\rho^-}{2},\qquad
 \rho=\frac{\rho^+-\rho^-}{2}.
\]
If $0<\alpha<1$, then $b_n=\rho_n=0$. For $\alpha=1$, formula
\eqref{cond-uncond-cent} remains valid with the truncated centerings given
above.

We can now state the distributional limit theorem. As before $\mathcal{L}_{q,0}$ denotes
the non-degenerate limit of the superdiffusive ERW, that is,
\[
\lim_{n \to \infty} \frac{S_n}{n^{2p-1}} = \mathcal{L}_{q,0}\qquad\text{a.s.}
\]
for $p>3/4$.

\begin{theorem}\label{alphafindimconv}
Suppose that $0<\alpha<2$ and that the two domains of attraction have the
common compatible normalization $a_n\sim n^{1/\alpha}L(n)$ specified above.
If
\[
 p<\frac{\alpha+1}{2\alpha},
\]
then
\begin{equation}\label{stablelimit1}
 \frac{\sum_{k=1}^n\mathcal{Z}_k-b_n}{a_n}
 \Longrightarrow S_\alpha(\gamma,\zeta,\delta).
\end{equation}

Assume now that $1<\alpha<2$. If $p=(\alpha+1)/(2\alpha)$ and $L(n)\to\ell$ for some
$0\leq\ell\leq\infty$, then the following alternatives hold:
\begin{enumerate}
 \item if $0<\ell<\infty$, then
 \begin{equation}\label{stablelimit2}
  \frac{\sum_{k=1}^n\mathcal{Z}_k-b_n}{a_n}
  \Longrightarrow S_\alpha(\gamma,\zeta,\delta)
  +\frac{\rho}{\ell} \mathcal{L}_{q,0},
 \end{equation}
 where the two random variables on the right-hand side are independent;
 \item if $\ell=\infty$, or if $\rho=0$, the limit in
 \eqref{stablelimit1} remains valid;
 \item if $\ell=0$ and $\rho\neq0$, the sequence on the left-hand side of
 \eqref{stablelimit2} is not tight.
\end{enumerate}

Finally, if $p>(\alpha+1)/(2\alpha)$, then, when $\rho\neq0$,
\begin{equation}\label{stablelimit3}
 \left(\frac{\sum_{k=1}^n\mathcal{Z}_k-b_n}{a_n}\right)_{n\geq1}
 \quad\text{is not tight},
\end{equation}
whereas
\begin{equation}\label{stablelimit4}
 \frac{\sum_{k=1}^n\mathcal{Z}_k-b_n}{n^{2p-1}}
 \Longrightarrow \rho \mathcal{L}_{q,0}.
\end{equation}
If $\rho=0$, the limit in \eqref{stablelimit1} remains valid for the
normalization $a_n$, while the left-hand side of \eqref{stablelimit4}
converges to zero in probability.
\end{theorem}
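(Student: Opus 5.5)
The proof rests on the decomposition
\[
\frac{\sum_{k=1}^n\mathcal{Z}_k-b_n}{a_n}
=\frac{\sum_{k=1}^n\mathcal{Z}_k-b_{X,n}}{a_n}+\frac{\rho_n}{a_n}S_n ,
\]
which is \eqref{cond-uncond-cent}. By \eqref{conv-cond-cent}, the first term converges in law to $S_\alpha(\gamma,\zeta,\delta)$ conditionally on $\mathcal{G}_\infty$, for almost every realization of the ERW. The second term is $\mathcal{G}_\infty$-measurable. So I would work with the joint characteristic function and condition on $\mathcal{G}_\infty$:
\[
\EE\Big[e^{iu\rho_nS_n/a_n}\,\EE\big[e^{iu(\sum\mathcal{Z}_k-b_{X,n})/a_n}\big|\mathcal{G}_\infty\big]\Big].
\]
The inner factor converges almost surely to $e^{\Psi(u)}$ and is bounded by one. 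By dominated convergence, every case then reduces to the behaviour of the deterministic multiplier times $S_n$, namely $\rho_nS_n/a_n$. Whenever that quantity converges in law to some $R$, the limit is the independent sum $S_\alpha(\gamma,\zeta,\delta)+R$. This is the same argument as in Proposition \ref{prop:finitedim<}.

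\textbf{Case $p<(\alpha+1)/(2\alpha)$.} I need $\rho_nS_n/a_n\to0$ in probability, and I split according to $\alpha$.

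For $1<\alpha<2$ we have $\rho_n=\rho$. The known ERW orders are $\sqrt n$, $\sqrt{n\log n}$ and $n^{2p-1}$ in the three regimes (the CLTs of \cite{B18,C}, or Corollaries \ref{prop:unidim} and \ref{prop:criticalcase} with $\beta=0$ and degenerate $Z$). Since $1/\alpha>1/2$, the orders $\sqrt n$ and $\sqrt{n\log n}$ are $o(n^{1/\alpha}L(n))$ by Potter bounds. Also $2p-1<1/\alpha$ holds exactly when $p<(\alpha+1)/(2\alpha)$, so $n^{2p-1}$ is $o(a_n)$ as well.

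For $0<\alpha<1$ we have $\rho_n=0$, so there is nothing to prove.

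For $\alpha=1$ the condition reads $p<1$, and $\rho_n=\frac1{2n}\big(b_n^+-b_n^-\big)$ is the difference of truncated means. I would use the standard fact that $\EE[Z\mathds 1_{\{|Z|\le a_n\}}]$ is slowly varying, so $|\rho_n|=n^{o(1)}$. Combined with $|S_n|=O(n^{\max(1/2,2p-1)+\epsilon})$ almost surely and $a_n=n^{1+o(1)}$, this gives $\rho_nS_n/a_n\to0$. I expect the $\alpha=1$ case to be the \textbf{main obstacle}: one has to control the growth of the truncated mean against $n^{1-(2p-1)}$. A fallback is the bound $|\rho_n|\le\EE|Z|\mathds 1_{\{|Z|\le a_n\}}$, which is slowly varying in $a_n$ and hence $n^{o(1)}$.

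\textbf{Critical case $p=(\alpha+1)/(2\alpha)$ with $1<\alpha<2$.} Here $2p-1=1/\alpha$ and $p>3/4$, so the superdiffusive almost sure limit $S_n/n^{2p-1}\to\mathcal{L}_{q,0}$ gives
\[
\frac{\rho S_n}{a_n}=\frac{\rho}{L(n)}\,\frac{S_n}{n^{1/\alpha}}\,(1+o(1)) .
\]
The alternatives follow from this:
\begin{enumerate}
\item if $0<\ell<\infty$, the right-hand side tends almost surely to $\rho\mathcal{L}_{q,0}/\ell$, which yields \eqref{stablelimit2} with independence from the conditioning argument;
\item if $\ell=\infty$ or $\rho=0$, it tends to $0$, which yields \eqref{stablelimit1};
\item if $\ell=0$ and $\rho\ne0$, then $|\rho S_n/a_n|\to\infty$ on the event $\{\mathcal{L}_{q,0}\neq0\}$, which has positive probability by non-degeneracy. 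Since the first summand is tight (it converges), the sum is not tight.
\end{enumerate}

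\textbf{Supercritical case $p>(\alpha+1)/(2\alpha)$.} Now $n^{2p-1}/a_n\to\infty$, so for $\rho\ne0$ the same tightness argument gives \eqref{stablelimit3}. Dividing instead by $n^{2p-1}$, the first summand is $O_P(a_n)/n^{2p-1}\to0$ and the second tends almost surely to $\rho\mathcal{L}_{q,0}$, which gives \eqref{stablelimit4}. If $\rho=0$, the second summand vanishes identically, so \eqref{stablelimit1} holds and the $n^{2p-1}$-scaled quantity tends to $0$ in probability.
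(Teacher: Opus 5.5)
Your proposal is correct and follows essentially the same route as the paper. You use the same decomposition \eqref{cond-uncond-cent}, the same conditioning on $\mathcal{G}_\infty$ with dominated convergence to obtain the independent sum, and the same case analysis of $\rho_nS_n/a_n$ via ERW growth rates and Potter bounds, including the slowly varying bound on the truncated means when $\alpha=1$. The only minor difference is that in the diffusive and critical ERW regimes you use the distributional orders of $S_n$ where the paper cites almost sure estimates; this suffices, since only convergence in probability is needed there.
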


\begin{proof}
The conditionally centered convergence is given by
\eqref{conv-cond-cent}. It therefore remains to study the term in
\eqref{cond-uncond-cent}.

For $0<\alpha<1$, this term is identically zero. When $1<\alpha<2$, it is
$\rho S_n/a_n$. The almost sure ERW estimates in
\cite[Theorems 3.2, 3.5 and 3.7]{B18}, together
with the standard Potter bounds for the slowly varying function $L$
\cite[Theorem 1.5.6]{BGT}, show that
\[
 \frac{S_n}{a_n}\xrightarrow{\PP}0
 \qquad\text{if}\qquad
 2p-1<\frac{1}{\alpha}.
\]
This also covers the diffusive and ERW-critical regimes, since
$1/2<1/\alpha$ for $\alpha<2$.

For $\alpha=1$, write
\[
 \frac{\rho_nS_n}{a_n}
 =\frac{S_n}{n}\frac{b_n^+-b_n^-}{2a_n}.
\]
The absolute values of the truncated first moments occurring in $b_n^\pm$
are bounded by slowly varying functions. Consequently,
$(b_n^+-b_n^-)/a_n$ is of at most slowly varying order, see
\cite[Chapter XVII]{Feller}. Since $p<1$, the ERW limit theorems imply that
there is a $\kappa>0$ such that $S_n/n=O(n^{-\kappa})$ almost surely. Using again Potter bounds
\cite[Theorem 1.5.6]{BGT} we obtain that the last display converges to
zero in probability. This proves \eqref{stablelimit1} for the full range
$0<\alpha<2$.

Suppose next that $1<\alpha<2$ and
$p=(\alpha+1)/(2\alpha)$. Then $2p-1=1/\alpha$, and
\[
 \frac{\rho S_n}{a_n}
 =\rho\frac{S_n}{n^{1/\alpha}}\frac{n^{1/\alpha}}{a_n}.
\]
The almost sure convergence of the ERW gives the three alternatives in the
critical case. The conditional convergence in \eqref{conv-cond-cent} has a
non-random limiting law. Conditioning and dominated convergence therefore
also give joint convergence with the $\mathcal{G}_\infty$-measurable ERW
term, and the two limits are independent. If $\ell=0$ and $\rho\neq0$,
non-tightness follows from the non-degeneracy of $\mathcal{L}_{q,0}$.

If $p>(\alpha+1)/(2\alpha)$, then
$a_n/n^{2p-1}\to0$ by Potter bounds. Hence the conditionally centered term
in \eqref{conv-cond-cent}, divided by $n^{2p-1}$, converges to zero in
probability, while
\[
 \frac{b_{X,n}-b_n}{n^{2p-1}}
 =\rho\frac{S_n}{n^{2p-1}}
 \longrightarrow\rho \mathcal{L}_{q,0}\qquad\text{a.s.}
\]
This proves \eqref{stablelimit3}--\eqref{stablelimit4}; when $\rho=0$, the
ERW term vanishes identically and \eqref{conv-cond-cent} yields the claimed
stable limit.
\end{proof}


For $1<\alpha<2$, Theorem \ref{alphafindimconv} exhibits a phase transition
according to the value of $p$. In the regime
$p<(\alpha+1)/(2\alpha)$, the preceding conditional argument also gives a
functional limit theorem.

\begin{theorem}\label{stable-functional-limit}
Suppose that $1<\alpha<2$ and
$p<(\alpha+1)/(2\alpha)$. Then
\[
 \left(
 \frac{\sum_{k=1}^{\lfloor nt\rfloor}\mathcal{Z}_k
 -(\lfloor nt\rfloor/n)b_n}{a_n}
 \right)_{t\geq0}
 \Rightarrow
 \big(\mathcal{S}_\alpha(t)\big)_{t\geq0}
\]
in $D([0,\infty))$ endowed with the $J_1$ topology, where
\[
 \mathcal{S}_\alpha(t)
 :=\mathcal{S}_\alpha^+(t/2)+\mathcal{S}_\alpha^-(t/2)
\]
is the stable L\'evy process with characteristic exponent $\Psi$. Since
$b_n=n\lambda$, the deterministic centering above may equivalently be
replaced by $tb_n$.
\end{theorem}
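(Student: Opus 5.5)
We write the centered process as the sum of a conditionally centered part and an ERW drift part. For $1<\alpha<2$ we have $b_n=n\lambda$ and $\rho_n=\rho$, and $b_{X,n}(t)=\tfrac{\lfloor nt\rfloor}{n}\cdot n\lambda+\rho S_{\lfloor nt\rfloor}$, because $N^\pm_m=(m\pm S_m)/2$. Hence
\[
\frac{\sum_{k=1}^{\lfloor nt\rfloor}\mathcal{Z}_k-(\lfloor nt\rfloor/n)b_n}{a_n}
=\frac{\sum_{k=1}^{\lfloor nt\rfloor}\mathcal{Z}_k-b_{X,n}(t)}{a_n}
+\rho\,\frac{S_{\lfloor nt\rfloor}}{a_n}.
\]
The plan has three steps: show that the first process converges unconditionally in $D([0,\infty))$ to $\mathcal{S}_\alpha$, show that the second converges to $0$ uniformly on compacts almost surely, and then combine the two.

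For the first term we start from \eqref{conditional-functional-limit}, which gives the functional convergence conditionally on $\mathcal{G}_\infty$ for almost every ERW realization, and the limit law does not depend on the realization. For a bounded $J_1$-continuous functional $F$ on $D([0,\infty))$, the conditional expectations $\EE[F(\cdot)\mid\mathcal{G}_\infty]$ therefore converge almost surely to $\EE[F(\mathcal{S}_\alpha)]$. Dominated convergence then gives unconditional convergence. The characteristic exponent of $\mathcal{S}_\alpha(1)=\mathcal{S}^+_\alpha(1/2)+\mathcal{S}^-_\alpha(1/2)$ is $(\Psi_++\Psi_-)/2=\Psi$, and the process is Lévy because the two summands are independent Lévy processes.

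For the second term we use $p<(\alpha+1)/(2\alpha)$, which means $2p-1<1/\alpha$ and also $1/2<1/\alpha$. The almost sure growth bounds for the ERW in \cite[Theorems 3.2, 3.5 and 3.7]{B18} give
\[
\max_{m\le nT}|S_m|=O\bigl(n^{\kappa}\bigr)\quad\text{a.s.}
\]
for some $\kappa<1/\alpha$. In the diffusive and critical regimes this follows from the law of the iterated logarithm, which gives order $\sqrt{n\log\log n}$ or $\sqrt{n\log n\log\log\log n}$. In the superdiffusive regime it follows from the almost sure convergence $S_n/n^{2p-1}\to\mathcal{L}_{q,0}$. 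The step from a bound on $|S_m|$ to a bound on its running maximum is immediate, since the envelope $n^\kappa$ is increasing. The Potter bounds \cite[Theorem 1.5.6]{BGT} give $a_n\ge c\,n^{1/\alpha-\epsilon}$ for every $\epsilon>0$. Choosing $\epsilon<1/\alpha-\kappa$, we get $\sup_{t\le T}|\rho S_{\lfloor nt\rfloor}/a_n|\to0$ almost surely.

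To combine, note that the second term converges in probability to the zero process in the uniform topology on compacts, hence in $J_1$. The sum of a $J_1$-convergent sequence and a sequence tending to a continuous (here zero) limit converges, by Slutsky's lemma in $D([0,\infty))$ together with continuity of addition when one limit is continuous. This proves the theorem. The final remark about replacing the centering by $tb_n$ follows because $|tb_n-(\lfloor nt\rfloor/n)b_n|\le|\lambda|$ and $a_n\to\infty$.

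The main obstacle is justifying \eqref{conditional-functional-limit} rigorously as an almost sure conditional statement, in particular the random time change by the clocks $N^\pm_{\lfloor nt\rfloor}/n$. The argument has to check that the subsequences of $Z^\pm$ picked out by $X$ are i.i.d. given $\mathcal{G}_\infty$, and that composition with a continuous, strictly increasing limit clock is $J_1$-continuous. Since the paper established \eqref{conditional-functional-limit} earlier, we will take it as given and only carry out the conditioning-to-unconditional step with care.
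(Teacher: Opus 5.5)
Your proposal is correct and follows essentially the same route as the paper: the same exact decomposition into the ERW drift $\rho S_{\lfloor nt\rfloor}/a_n$, which the almost sure ERW growth bounds combined with Potter bounds make negligible uniformly on compacts, plus the conditionally centered heavy-tailed part, which converges via conditional functional convergence and the random time change by $N^\pm_{\lfloor nt\rfloor}/n$. The differences are minor. You invoke \eqref{conditional-functional-limit} for the combined heavy-tailed part, whereas the paper re-derives it componentwise. You also get almost sure rather than in-probability negligibility of the drift, and you make the de-conditioning step and the $tb_n$ remark explicit where the paper leaves them implicit.
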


\begin{proof}
Set
\[
 Y_t^{(n)}:=
 \frac{\sum_{k=1}^{\lfloor nt\rfloor}\mathcal{Z}_k
 -(\lfloor nt\rfloor/n)b_n}{a_n},\qquad t\geq0.
\]
Using \eqref{stable-decomposition}, we obtain the exact decomposition
\[
 Y_t^{(n)}=
 \frac{\rho}{a_n}S_{\lfloor nt\rfloor}
 +\frac{1}{2a_n}\sum_{k=1}^{\lfloor nt\rfloor}
 \widetilde Z_k^+(1+X_k)
 +\frac{1}{2a_n}\sum_{k=1}^{\lfloor nt\rfloor}
 \widetilde Z_k^-(1-X_k),
\]
where $\widetilde Z_k^\pm=Z_k^\pm-\rho^\pm$. The almost sure ERW estimates
in \cite[Theorems 3.2, 3.5 and 3.7]{B18} and Potter bounds
\cite[Theorem 1.5.6]{BGT} imply that, for every
$T>0$,
\[
 \sup_{0\leq t\leq T}
 \frac{|S_{\lfloor nt\rfloor}|}{a_n}
 \xrightarrow{\PP}0,
\]
because $p<(\alpha+1)/(2\alpha)$. Thus the first term converges to zero
uniformly on compact time intervals in probability.

We treat the second term; the third one is analogous. Let
\[
 J_t^{(n)}:=\frac{1}{a_n}\sum_{k=1}^{\lfloor nt\rfloor}
 \widetilde Z_k^+(1+X_k),\qquad t\geq0.
\]
For completeness, the usual $J_1$ tightness conditions on $[0,T]$ are
\begin{equation}\label{tightG1}
 \lim_{r\uparrow\infty}\limsup_{n\to\infty}
 \PP\!\left(\|J^{(n)}\|_T>r\mid\mathcal{G}_\infty\right)=0
 \qquad\text{a.s.}
\end{equation}
and, for every $\epsilon>0$,
\begin{equation}\label{tightG2}
 \lim_{\delta\downarrow0}\limsup_{n\to\infty}
 \PP\!\left(w_T'(J^{(n)},\delta)>\epsilon
 \mid\mathcal{G}_\infty\right)=0
 \qquad\text{a.s.},
\end{equation}
where $\|\cdot\|_T$ is the uniform norm and
\[
 w_T'(x,\delta):=\inf_{\{t_i\}}
 \max_i\sup_{s,t\in[t_{i-1},t_i)}|x(t)-x(s)|,
\]
the infimum being taken over partitions
$0=t_0<t_1<\cdots<t_v=T$ satisfying
$\min_i(t_i-t_{i-1})>\delta$; see
\cite[Theorem 13.2]{Billingsley}.

We prove the stronger conditional functional convergence. Define
\[
 \widetilde A_n^+(u):=\frac{1}{a_n}
 \sum_{j=1}^{\lfloor nu\rfloor}\widetilde Z_j^+,
 \qquad u\geq0.
\]
Since $\widetilde Z_1^+$ belongs to the domain of attraction of
$S_\alpha^+$, the stable functional limit theorem gives
\[
 \widetilde A_n^+\Rightarrow\mathcal{S}_\alpha^+
\]
in the $J_1$ topology. Conditionally on $\mathcal{G}_\infty$, the variables
selected at the times for which $X_k=1$ remain i.i.d. with the distribution
of $\widetilde Z_1^+$. Consequently,
\[
 \big(J_t^{(n)}\big)_{t\geq0}
 \overset{d\mid\mathcal{G}_\infty}{=}
 \left(2\widetilde A_n^+\!\left(
 \frac{N_{\lfloor nt\rfloor}^+}{n}\right)\right)_{t\geq0}.
\]
By \eqref{clock-convergence} and the random time-change theorem
\cite[Section 13.2]{W02}, for almost every realization of the ERW,
\[
 \big(J_t^{(n)}\big)_{t\geq0}
 \Rightarrow
 \big(2\mathcal{S}_\alpha^+(t/2)\big)_{t\geq0}
\]
conditionally on $\mathcal{G}_\infty$. In particular,
\eqref{tightG1} and \eqref{tightG2} hold. The same argument gives
\[
 \left(\frac{1}{a_n}\sum_{k=1}^{\lfloor nt\rfloor}
 \widetilde Z_k^-(1-X_k)\right)_{t\geq0}
 \Rightarrow
 \big(2\mathcal{S}_\alpha^-(t/2)\big)_{t\geq0}
\]
conditionally on $\mathcal{G}_\infty$. The two limiting processes are
independent. Combining these convergences with the negligibility of the ERW
term completes the proof.
\end{proof}

\medskip

For $1<\alpha<2$, functional versions of the remaining cases of
Theorem \ref{alphafindimconv} follow from
\eqref{conditional-functional-limit} and the superdiffusive
functional limit theorem for the ERW; see
\cite[Theorem~3]{BB16}.
If $p>(\alpha+1)/(2\alpha)$, normalization by $n^{2p-1}$
yields the limit $(\rho t^{2p-1}L_{q,0})_{t\geq0}$:
by Potter bounds, the centered heavy-tailed contribution
vanishes uniformly on compact time intervals in probability.

At $p=(\alpha+1)/(2\alpha)$, the functional counterparts hold
with the same qualifications concerning $L$ and $\rho$ as in
Theorem \ref{alphafindimconv}. In particular, if
$L(n)\to\ell\in(0,\infty)$, normalization by $a_n$ yields
\[
 \left(
 \mathcal{S}_\alpha(t)+\frac{\rho}{\ell}t^{1/\alpha}L_{q,0}
 \right)_{t\geq0},
\]
with independent components. The required joint convergence
follows from \eqref{conditional-functional-limit}.
All these functional limits use the linear time scaling
$\lfloor nt\rfloor$ and the $J_1$ topology.


\medskip

\noindent {\bf Acknowledgments:} Glauco Valle would like to thank UFABC, where this project was started during a visit to Cristian Coletti. 

\bigskip



\end{document}